\newtheorem{theorem}{Theorem}[section]
\newtheorem{thm}[theorem]{Theorem}
\newtheorem{lem}[theorem]{Lemma}
\newtheorem{proposition}[theorem]{Proposition}
\newtheorem{prop}[theorem]{Proposition}
\newtheorem{corollary}[theorem]{Corollary}
\theoremstyle{definition}
\newtheorem{defn}[theorem]{Definition}
\theoremstyle{remark}
\newtheorem{remark}[theorem]{Remark}
\newtheorem{rem}[theorem]{Remark}
\numberwithin{equation}{section}
 \DeclareMathAlphabet{\mathpzc}{OT1}{pzc}{m}{it}
\newcommand\redsout{\bgroup\markoverwith{\textcolor{red}{\rule[0.5ex]{2pt}{1.5pt}}}\ULon}
\newcommand{\dif}{\mathrm{d}}
\newcommand{\abs}[1]{\left\vert#1\right\vert}
\newcommand{\set}[1]{\left\{#1\right\}}
\newcommand{\norm}[1]{\left\Vert#1\right\Vert}
 \DeclareMathOperator{\tr}{tr}
\newcommand{\E}{\mathbb{E}}
\newcommand{\PP}{\mathbb{P}}
\newcommand{\e}{\varepsilon}
\newcommand{\Ll}{\langle}
\newcommand{\Rr}{\rangle}
\newcommand{\N}{\mathbb{N}}
\newcommand{\R}{\mathbb{R}}
\newcommand{\C}{\mathbb{C}}
\newcommand{\mcl}{\mathcal}
\newcommand{\Be}{\begin{equation}}
\newcommand{\Ee}{\end{equation}}
\newcommand{\Bs}{\begin{split}}
\newcommand{\Es}{\end{split}}
\newcommand{\Bes}{\begin{equation*}}
\newcommand{\Ees}{\end{equation*}}
\newcommand{\BT}{\begin{thm}}
\newcommand{\ET}{\end{thm}}
\newcommand{\Bp}{\begin{proof}}
\newcommand{\Ep}{\end{proof}}
\newcommand{\BL}{\begin{lem}}
\newcommand{\EL}{\end{lem}}
\newcommand{\BP}{\begin{proposition}}
\newcommand{\EP}{\end{proposition}}
\newcommand{\BC}{\begin{corollary}}
\newcommand{\EC}{\end{corollary}}
\newcommand{\BR}{\begin{rem}}
\newcommand{\ER}{\end{rem}}
\newcommand{\BD}{\begin{defn}}
\newcommand{\ED}{\end{defn}}
\newcommand{\BI}{\begin{itemize}}
\newcommand{\EI}{\end{itemize}}
\newcommand{\tl}{\tilde}
\newcommand{\mi}{{\rm i}}
 \newcommand{\innp}[1]{\langle {#1}\rangle}
\newcommand{\cle}{\mathcal{E}}
\definecolor{amet}{rgb}{0.8, 0.2, 0.8}
\definecolor{ques}{rgb}{0.8, 0.2, 0.2}
\begin{document}
\title[LDP and FT for the EPR of Gaussian Process]{Large Deviations of the Entropy Production Rate for a Class of Gaussian Processes
}
\author[A. Budhiraja]{Amarjit Budhiraja}
\address{Department of Statistics and Operations Research,
University of North Carolina,
Chapel Hill, NC 27599, United States}
\email{budhiraj@email.unc.edu}
\author[Y. Chen]{Yong Chen}
\address{College of Mathematics and Information Science, Jiangxi Normal University,
Nanchang, Jiangxi,  330022, P. R. China }
\email{zhishi@pku.org.cn}
\author[L. Xu]{Lihu Xu}
\address{1. Department of Mathematics, Faculty of Science and Technology
University of Macau Av. Padre Tom\'{a}s Pereira, Taipa Macau, China \\
2. UMacau Zhuhai Research Institute, China}
\email{lihuxu@um.edu.mo}
\begin{abstract}
We prove a large deviation principle (LDP) and a fluctuation theorem (FT) for the entropy production rate (EPR) of the following $d$ dimensional stochastic differential equation
\begin{equation*}  
\dif X_{t}=AX_{t} \dif t+\sqrt{Q}\dif B_{t} 
\end{equation*}
where $A$ is a real normal stable matrix, $Q$ is  positive definite, and the matrices $A$ and $Q$ commute.  The rate function for the EPR takes the following explicit form:
\begin{equation*}
I(x)=\left\{
\begin{array}{ll}
x\frac{\sqrt{1+\ell_0(x)}-1}{2}+\frac 12\sum\limits_{k=1}^{d}  \left(\sqrt{\alpha_k^2- \beta_k^2\ell_0(x)}+\alpha_k\right) ,  & x\ge 0 ,  \\
-x\frac{\sqrt{1+\ell_0(x)}+1}{2} +\frac 12\sum\limits_{k=1}^{d} \left(\sqrt{\alpha_k^2- \beta_k^2\ell_0(x)}+\alpha_k\right)  , &x<0,
\end{array}
\right.
\end{equation*} where $\alpha_{k}\pm {\rm i} \beta_{k}$ are the eigenvalues of $A$, and 
$\ell_0(x)$ is the unique solution of the equation:
\begin{align*}
{\abs{x}}={\sqrt{1+\ell}} \times \sum_{k=1}^{d} \frac{\beta_k^2}{\sqrt{\alpha_k^2 -\ell\beta_k^2} },\qquad -1 \le \ell<  \min_{k=1,...,d}\set{\frac{\alpha_k^2}{\beta_k^2}}.
\end{align*}
Simple closed form formulas for rate functions are rare and our work identifies an important class of large deviation problems where such formulas are available.
The logarithmic moment generating function (the fluctuation function) $\Lambda$ associated with the LDP is given as  
\Bes
\Lambda(\lambda)=\begin{cases} 
	-\frac 12\sum\limits_{k=1}^{d}   \left(\sqrt{ \alpha_k^2- 4\lambda(1+\lambda) \beta_k^2} +\alpha_k\right) \ \ \ &  \lambda \in \mcl D, \\
	\infty, & \lambda \notin \mcl D,
\end{cases}
\Ees 
where $\mcl D$ is the domain of $\Lambda$. The functions
 $\Lambda(\lambda)$ and $ I(x)$ satisfy the  Cohen-Gallavotti symmetry properties:
	\begin{equation*} 
	\Lambda(x)=\Lambda(-(1+x)),\quad I(x)=I(-x)-x,\, \mbox{ for all } x\in \R.
	\end{equation*}
	
In particular, the functions $I$ and $\Lambda$ do not depend on the diffusion matrix $Q$, and are determined completely by the real and imaginary parts of the eigenvalues of $A$. Formally, the deterministic system with $Q=0$ has zero EPR and thus the model exhibits a phase transition in that the EPR changes discontinuously at $Q = 0$.\\

{{\em Keywords}:  Entropy production rate, Large deviation principle,   Gallavotti-Cohen functional,  It\^{o}-Wiener chaos, Sturm-Liouville problems, Nonequilibrium statistical mechanics.}

{{\em MSC(2010):} 60F10, 60H10, 82C05.
}
\end{abstract}

\maketitle

\section{Introduction}  \label{s:Intr}
Let $\{X_t\}$ be a Markov process with sample paths in $C([0, \infty); \cle)$ (space of continuous functions from
$[0, \infty)$ to 
$\cle$ equipped with the local uniform topology), where $\cle$ is some Polish space, with a stationary distribution
$\mu$. For $t>0$, denote by $\PP^\mu_{[0,t]}$ the probability law on $C([0,t]; \cle)$ (space of continuous functions from
$[0, t]$ to 
$\cle$ equipped with the uniform topology) of $\{X_s\}_{0\le s \le t}$, under the stationary measure (i.e. when $X_0$ has distribution $\mu$). Also, denote by $\PP^{\mu,-}_{[0,t]}$ the probability law of the time reversed process, namely $\PP^{\mu,-}_{[0,t]} = \PP^\mu_{[0,t]} \circ \theta_t^{-1}$, where $\theta_t: C([0,t]; \cle) \to
C([0,t]; \cle)$ is defined as $\theta_t(x)(s)\doteq x(t-s)$, $0\le s \le t$.
Sample entropy production rate, also known as the Gallavotti-Cohen functional, of the Markov process $\{X_t\}$
under the stationary distribution $\mu$ is defined as
\[ e_p(t) = \begin{cases}
                    \frac{1}{t}\log \frac{\dif \PP^\mu_{[0,t]}}{\dif \PP^{\mu,-}_{[0,t]}} & \mbox{ if }
					    \PP^\mu_{[0,t]} \ll \PP^{\mu,-}_{[0,t]}\\
                     \infty & \mbox{ otherwise } 
                 \end{cases}, \; t >0. \]
Note that $e_p(t)=0$ for all $t>0$ if and only if the Markov process is reversible under the stationary measure
$\mu$. Thus EPR can be viewed as a measure of irreversibility of the process $\{X_t\}$.
Entropy production as a means to quantify  irreversibility of a physical
process has been considered in a broad range of model settings e.g. chemical networks \cite{End 17} and biological populations \cite{SeaYad Lin 18}; and across a wide spectrum of temporal and spatial scales,  from cells to planetary climates \cite{Whit 05}. The mathematical formulation of EPR for stochastic processes in terms of time reversed processes originated in {\cite{Kur, LeSp, QM91}}; see also
%
%
%
\cite{JQQ2004, LanToOli 13, ToOli 12, ECM, GC95}. 
For more recent work on asymptotics of entropy production rate, we refer the reader to \cite{BG15, San Lan Peter 17, San Ce Lan 19, JPS16, JPS17} and the references therein.
In this work we consider a $\R^d$-valued Markov process given by the solution of the stochastic differential equation (SDE):
\begin{equation}  
\dif X_{t}=AX_{t} \dif t+\sqrt{Q}\dif B_{t} \label{langevin0}
\end{equation}
where $A \in \R^{d \times d}, Q \in \R^{d \times d}$ satisfy
 the following stability and irreducibility assumption: 
\begin{itemize}
\item[{\bf (A)}]  All the eigenvalues of $A$ have negative real parts and $Q$ is positive definite.
\end{itemize}
Under Assumption  {\bf (A)}, $\{X_t\}$ admits a unique invariant measure $\mu$ and  the empirical EPR process $e_p(t)$ is well defined. The first goal of this work is to establish a large deviation principle (LDP) for the
EPR process $e_p(t)$ as $t\to \infty$.  From the classical work of Donsker and Varadhan\cite{DoVa7-8} (see also \cite{Che08,DoVa7-8,Kif90,Wu01}), the large deviation behavior of the empirical measure process $\frac{1}{t}\int_0^t \delta_{X_s} ds$ is well understood. However, the large deviations  of the process $e_p(t)$ cannot be deduced from these results in a simple manner. In particular, this process is given in terms of empirical average of a quadratic functional of the state process together with a time averaged stochastic integral (see \eqref{eprform}). The superlinearity of the functional of interest makes the analysis of large deviation properties of the EPR particularly challenging. 

A second objective of this work is to establish the `fluctuation theorem' (FT) for the diffusion given in \eqref{langevin0}. A fluctuation theorem in non-equilibrium statistical mechanics, as formulated in the mathematical theory by Cohen and Gallavotti\cite{GC95}, is a symmetry property of the rate function $I$ associated with the LDP for the EPR process $e_p(t)$ which states that $I(x) + \frac{x}{2} = I(-x) - \frac{x}{2}$, for all $x \in \R^d$. Formally speaking, such a property gives a universality result which says that, for large $T$, the ratio between the probabilities of events $\{e_T= x\}$ and $\{e_T = -x\}$ is close to a model independent quantity given as
$$\frac{P[e_T=x]}{P[e_T=-x]} \approx \exp\{-T(I(x)- I(-x))\} = \exp\{Tx\}.$$
In this work,  the fluctuation theorem that gives the Cohen-Gallavotti symmetry properties of the rate function, for the model in \eqref{langevin0}, will be established by identifying an explicit closed form expression for the rate function (see \eqref{rate func}). Explicit formulas for  rate functions are rare and our work identifies an interesting and important class of large deviation problems for which such formulas are available.

A special case of \eqref{langevin0} was studied in
Chen et al. \cite{cgxx} where a three dimensional Langevin equation governing the motion of a charged test particle in a constant magnetic field
was analyzed.
In this model $X_{t}=[X^{1}_{t}, X^{2}_{t},X^{3}_{t}]'$,  with $'$ being the transpose operation,  is the particle's velocity at the time $t$; $B_{t}=[B^{1}_{t}, B^{2}_{t},B^{3}_{t}]'$ is a three dimensional standard Brownian motion, and
$$A=-\begin{bmatrix} \cos \theta & - \sin \theta & 0 \\  \sin \theta & \cos \theta & 0 \\ 0 & 0 & \cos \theta
 \end{bmatrix}, \ \ Q=\begin{bmatrix} {\cos \theta} & 0 & 0 \\  0 & {\cos \theta} & 0 \\  0 & 0 & {\cos \theta}\end{bmatrix}, \ \ \ \theta \in (-\frac{\pi}2, \frac{\pi}2).$$

The coordinates  $X^{3}_{t}$ and $[X^{1}_{t}, X^{2}_{t}]'$ are respectively parallel and perpendicular to the magnetic field, and so the Lorentz force only acts on $[X^{1}_{t}, X^{2}_{t}]'$, driving the test particle to spiral around the magnetic field line. The Brownian motion $B_t$ models the collisions between the test particle and the ones in the medium. The dissipation in \eqref{langevin0}, with a strength $\cos \theta$, models the `wave propagation' produced by particles collisions, and the amplitude and sign of $\sin \theta$ describe the strength and direction of the magnetic field respectively. 
Finally, the diffusion coefficient $\cos \theta$ is derived from the classical mean square displacement assumption in statistical mechanics. For additional details on the background of \eqref{langevin0}, we refer the reader to \cite[Chapter 11]{Bal97}.
We note that this three dimensional system satisfies the  following magnetic field property:
\begin{equation}  \label{e:AQComm}
AA'=A'A, \ \ \ \ AQ=QA.
\end{equation}
This property plays a key role in the analysis.
Since in the above three dimensional model $X_{t}^{3}$ is a  reversible Ornstein-Uhlenbeck process (independent of the first two coordinates),  whose EPR is always zero, it suffices, for studying the  large deviation and fluctuation theorem for EPR of \eqref{langevin0}, to consider the simplified two dimensional system given as:
\begin{align}  
\begin{bmatrix}
\dif X_t^{1}\\ \dif X_t^{2} \end{bmatrix}& =-\begin{bmatrix} \cos \theta & - \sin \theta \\  \sin \theta & \cos \theta 
 \end{bmatrix}
\begin{bmatrix} X_t^1\\ X_t^2 \end{bmatrix} \dif t + \begin{bmatrix} {\sqrt{\cos \theta}} & 0  \\  0 & \sqrt{\cos \theta} \end{bmatrix}
\begin{bmatrix} \dif B_t^1\\ \dif B_t^2 \end{bmatrix}. \label{langevin-0}
\end{align}
A natural approach for the study of a large deviation principle for $e_p(t)$ is by an application of the 
G\"artner-Ellis Theorem \cite{demzeit} (see e.g. \cite{BG15,GC95, JPS16, JPS17}). A key step in the implementation of this approach is to compute the 
associated Cram\'er function and study its regularity properties. The paper \cite{cgxx} computed the 
Cram\'er function for the EPR associated with the above reduced two-dimensional problem by studying 
the Karhunen-Lo\`{e}ve (KL) expansion of the complex valued stochastic process $z(t)=X_t^{1}+{\rm i} X_t^{2}$\cite{ito}. In the current work we are interested in the general $d$-dimensional diffusion governed by
\eqref{langevin0} where the coefficients satisfy the stability and irreducibility condition in {\bf (A)} and
the magnetic field property  \eqref{e:AQComm}.
Analogous to the above three dimensional SDE  for a single test particle in a magnetic field, the general $d$-dimensional
equation in \eqref{langevin0} can be viewed as a model for the dynamics of a 
collection of charged test particles in a constant magnetic field (cf. \cite{Bal97}).
In this general setting
a central tool in the proofs of \cite{cgxx}, namely a  Karhunen-Lo\`{e}ve  expansion for $X_t$ with independent stochastic coefficients is not available. {Such an expansion was used in \cite{cgxx} (see equation (14)  therein) in order to represent the
 (pre-limit) Cram\'er function  (see \eqref{e:LamZt}) in terms of an infinite product of the form
 {\begin{equation}
 \label{eq:infprod}
 \prod_{k\ge 1} \E e^{\frac{\lambda(1+\lambda)}{2} w_k^2},	
 \end{equation}
where $\lambda \in \R$} and $\{w_k\}$ is a sequence of independent normal random variables. This representation
played a key role in  \cite{cgxx} in the computation of the 
asymptotics of the Cram\'er function {(see eg. \cite[Corollary 3.3]{cgxx})}.
%
For the general multidimensional setting considered here,
such a simple form representation cannot be given and therefore,  here we take a different approach than the one based on a
Karhunen-Lo\`{e}ve  expansion.
The starting point in this approach is to 
decompose the integral of the quadratic form that appears in the exponent for the (pre-limit) Cram\'er function  (see \eqref{e:LamZt}) into a Wiener-It\^{o} chaos expansion up to the second order. 
The representation that we obtain for the Cram\'er function (see Theorem \ref{t:ExpIntZ}) is significantly more involved than the representation in terms of independent normal random variables in \eqref{eq:infprod} used in \cite{cgxx} and the study of its asymptotics requires a careful analysis of the spectral properties of the integral operator in the second chaos of the 
Wiener-It\^{o}  expansion.
Specifically, a key idea in the analysis is to decompose the {symmetric compact} operator, associated with the map $H_{\lambda,T}$ appearing in the second chaos (see Proposition \ref{p:WienerDec}), acting on the complex Hilbert space $L^2([0,T];\mathbb{C}^d)$, into $d$ operators on $L^2([0,T];\mathbb{C})$, and obtain estimates on its eigenvalues by solving a family of Sturm-Liouville problems. 
{These estimates play a central role in characterizing the domain of finiteness of the Cram\'er functional. This characterization of the domain and an analysis of its boundary properties is the main ingredient in the application of the G\"artner-Ellis Theorem. We remark that Wiener-It\^{o} chaos expansions to study properties of exponential functionals of Gaussian quadratic forms have been used in \cite{GrHu05, GaJi07} in the study of certain mathematical finance problems; however the analysis methods are quite different from those used in the current work.
Also,  analysis of Sturm-Liouville problems to study spectral properties of integral operators has along history, in particular, in a simpler context such an analysis was carried out in \cite{cgxx}.}
Specifically,  \cite{cgxx}  considers integral operators $K_T$ on $L^2([0,T];\mathbb{C})$, $T<\infty$, associated with a single kernel which is the covariance function of the complex Gaussian process $z(t)$, whereas here we need to analyze the spectral properties of  integral operators  on
$L^2([0,T];\mathbb{C}^d)$ associated with an infinite collection of kernels $\{H_{\lambda, T}, \lambda \in \R, T <\infty\}$ (see Section \ref{s:EigK}).}

A notable feature of our results is that, {although the invariant measure of the Markov process in \eqref{langevin0}
obviously depends on $Q$,} the Cram\'{e}r function and the rate function for the LDP of $e_p(t)$
do not depend on this matrix (as long as Assumption {\bf (A)} and \eqref{e:AQComm} are satisfied). These quantities are completely determined from the real and imaginary parts of the eigenvalues of $A$, see Theorems \ref{t:EPR} and \ref{t:FT}. The noise term in \eqref{langevin0} is usually thought of as external heat, with $Q$ describing its temperature, and the rate and Cram\'{e}r functions $I$ and $\Lambda$ can be viewed as a type of entropy \cite{DoVa7-8} and free energy \cite{HaSc07}, respectively.  Our results say that as the temperature tends to zero, the entropy and the free energy quantities identified in Theorems \ref{t:EPR} and \ref{t:FT} do not change. Formally, the deterministic system with $Q=0$, {which is the zero trajectory,} has zero EPR and thus the model exhibits a phase transition in that the EPR changes discontinuously at $Q =0$.
It would be interesting to identify more general conditions under which a stochastic system exhibits such a phenomenon.

%

In a recent work Bertini et al. \cite{BG15} study a $d$-dimensional diffusion process with diffusion coefficient
$\sqrt{\epsilon} \mbox{Id}$ where $\epsilon$ is a small parameter. In view of the technical obstacles arising from the unboundedness (in fact quadratic growth
when the drift is linear) 
of functionals describing the EPR process, they consider a setting where the drift $c$ admits a decomposition
of the form $c = -\frac{1}{2} \nabla V +b$ where $b$ is a smooth and {\em bounded } vector field with bounded
derivatives that is orthogonal to $\nabla V$  at every $x \in \R^d$. They propose a modified definition of the
Gallavotti-Cohen functional (i.e. the sample EPR) that is given in terms of the bounded vector field $b$ (see (1.7) therein). One of the main results in \cite{BG15} establishes a large deviation principle for the modified
Gallavotti-Cohen functional, as  $\epsilon \to 0$ and  $T\to \infty$ (in that order),  which is described in terms of the 
Freidlin-Wentzell quasipotential associated with the small noise asymptotics of the diffusion \cite{FrWe84}.
Although, unlike the current work, \cite{BG15} allows for nonlinear drifts, their analysis, due to the boundedness of $b$, becomes more tractable in some ways as they do not need to handle terms with a quadratic growth in the EPR process. 

  
The recent works of {Jak\v{s}i\'{c}}  et al. \cite{JPS16, JPS17} prove an LDP for the entropy production for a family of Gaussian dynamical systems. Their proofs are analytic, relying on the properties of the maximal solution of a one-parameter family of algebraic matrix Riccati equations, rather than the tools from  stochastic analysis such as chaos decomposition that are used in our work. In particular, their results do not provide explicit closed form formulas for the rate function of the type in \eqref{rate func}.


The paper is organized as follows. Section \ref{s:Epr} introduces some notation and definitions that are frequently used in this work and presents our main results. 
Sections \ref{s:AnaZ} and \ref{s:EigK}  provide the analysis of the integral of the quadratic form that appears in the pre-limit Cram\'{e}r function (see \eqref{e:LamZt})  and that of the eigenvalues of the symmetric compact operator associated with the map $H_{\lambda,T}$ in the  chaos expansion of this integral (see Proposition \ref{p:WienerDec}), respectively. Based on these preparatory results, we prove the LDP and the FT for the  EPR in Section \ref{s:MResult}. 
The Appendix contains the elementary proof of the fact that as a consequence of \eqref{e:AQComm} 
\Be  \label{e:CommQAEtAl}
\mbox{ the collection of matrices }
\mathbb{M} \doteq \{A, A', Q, Q^{1/2}, Q^{-1}, Q^{-1/2}, M, N\} \mbox{ is a commuting family, }
\Ee
namely, for any $F,G \in \mathbb{M}$, $FG=GF$. Here
$$M=A+A^{'}, \ \ \ \ N=A-A^{'}.$$

\section{Entropy production rate and main result} \label{s:Epr}
\subsection{Definition of EPR and the main results}
Throughout this work we assume that
Assumption {\bf (A)} and the magnetic field property \eqref{e:AQComm} are satisfied. 
Recall from Section \ref{s:Intr} that under Assumption {\bf{A}}, $X_t$ admits a unique invariant measure $\mu$.
In fact this measure  is absolutely continuous with respect to
Lebesgue measure and has the following density function:
\begin{equation}  \label{e:Rho}
   \rho({x})=(2\pi)^{-\frac{d}{2}}(\mathrm{det} \Gamma)^{-\frac{1}{2}}\exp\left(-\frac{x^{'}\Gamma^{-1}{x}}2\right),
\end{equation}
with
\begin{equation*}
   \Gamma=\int_0^{\infty}e^{{A}s}{Q}e^{{A}^{'} s}\dif s=-Q M^{-1},
\end{equation*}
where the last equality is  by \eqref{e:CommQAEtAl} from which it follows that
 $AQ=QA$ and $A^{'}Q=Q A^{'}$, and by using the fact that $M$ is negative definite (cf. \cite{wangxu}).

Denote by $(X^{\mu}_{t})_{t \ge 0}$ the stationary process obtained by solving the SDE \eqref{langevin0} with the random
variable $X_0$ distributed according to 
 $\mu$. For $0\le s<t<\infty$, consider the process $(X^{\mu}_{u})_{s \le u \le t}$ and its time reversal $(X^{\mu,-}_{u})_{s\le u\le t}:=(X^{\mu}_{t+s-u})_{s\le u\le t}$. Note that both  are stationary  Markov processes.
Let $\PP^\mu_{[s,t]}$ and $\PP^{\mu,-}_{[s,t]}$ be  the distributions of $(X^{\mu}_{u})_{s\le u\le t}$ and $(X^{\mu,-}_{u})_{s\le u\le t}$.

Occasionally,  for notational simplicity,  we will write
$$X_{t}=X^{\mu}_{t}, \ \ \ \ X^{-}_{t}=X^{\mu,-}_{t}, \ \ \ \ \ t>0.$$

From \cite{JQQ2004}, the sample EPR of Eq. \eqref{langevin0} is
\begin{align}
e_p(t):&=\frac{1}{t}\log \frac{\dif \PP^\mu_{[0,t]}}{\dif \PP^{\mu,-}_{[0,t]}} \notag\\
&=\frac{1}{t}\left[\int_{0}^t\,({Q}^{\frac12}F(X_s))^{'} \dif B_s  +\frac12 \int_0^t |{Q}^{\frac12}F(X_s)|^{2}\dif s\right]  \notag \\
&=\frac{1}{t}\left[\int_{0}^t\,({Q}^{-\frac12} NX_s)^{'} \dif B_s  +\frac12 \int_0^t |{Q}^{-\frac12} NX_s|^2 \dif s\right]
\label{eprform}
\end{align}
where
\begin{equation} \label{fx}
   F({x})=2{Q}^{-1}{A}{x}-\nabla\log \rho({x})=(2{Q}^{-1}{A}+\Gamma^{-1}){x}=Q^{-1} Nx.
\end{equation}
Recall that the process $(X^{\mu}_t)_{t\ge0}$ is  {\bf reversible} if   $\PP^\mu_{[s,t]}=\PP^{\mu,-}_{[s,t]}$  for any $0\le s<t<\infty$. For such a process the sample EPR is simply zero. Thus we will only be concerned with
situations where $X^{\mu}$ is not reversible.
It is well known that the stationary Ornstein-Uhlenbeck process $(X_{t}^{\mu})_{t \ge 0}$ is reversible if and only if the coefficients $A$ and $Q$ satisfy the symmetry condition $Q^{-1/2}A=(Q^{-1/2}A)^{'}$ (cf. \cite[p. 1338]{wangxu}). 
From \eqref{e:CommQAEtAl} it then follows that if $A$ is symmetric then, under our assumptions, the Markov process
$X^{\mu}$ is reversible. Thus we will assume that
$A$ is not symmetric.  Together with the fact that $A$ is a normal matrix, this implies that 
\begin{equation}
	\label{eq:impfact}
	\mbox{ not all of the eigenvalues of } A \mbox{ are real}.
\end{equation}

By the ergodic theorem, we have that,
\
\begin{equation}
 \mbox{ as } t \to \infty, e_p(t) \mbox{ converges a.s.  to } \frac 12 \int_{\R^d}  |{Q}^{-\frac12} N x|^{2} \rho(x) \dif x.
\end{equation}
Our main result gives asymptotics of probabilities of deviations from the above law of large numbers limit by 
establishing the following large deviation principle.
\begin{thm}[Large Deviation Principle]  \label{t:EPR} 
	Suppose that Assumption {\bf (A)} and \eqref{e:AQComm} are satisfied and that $A$ is not a symmetric matrix.
 Then the sample EPR $e_{p}(t)$ satisfies an LDP in $\R$ with the rate function $I$ given as
\begin{equation}\label{rate func}
  I(x)=\left\{
      \begin{array}{ll}
     x\frac{\sqrt{1+\ell_0(x)}-1}{2}+\frac 12\sum\limits_{k=1}^{d} \left(\sqrt{\alpha_k^2-\beta_k^2\ell_0(x)}+\alpha_k\right) ,  & x\ge 0 ,  \\
     -x\frac{\sqrt{1+\ell_0(x)}+1}{2} +\frac 12\sum\limits_{k=1}^{d} \left(\sqrt{\alpha_k^2-  \beta_k^2\ell_0(x)}+\alpha_k\right) , &x<0,
      \end{array}
\right.
\end{equation} where $\alpha_{k}\pm {\rm i} \beta_{k}$ are the eigenvalues of $A$, and 
 $\ell_0(x)$ is the unique solution of the  equation
\begin{align}\label{eq:absxeq}
 {\abs{x}}={\sqrt{1+\ell}} \times \sum_{k=1}^{d} \frac{\beta_k^2}{\sqrt{\alpha_k^2 -\ell \beta_k^2} },\qquad -1 \le \ell<  \min_{k=1,...,d}\set{\frac{\alpha_k^2}{\beta_k^2}},
\end{align}
 namely the following properties hold
  \begin{itemize}
	    \item[(i)] $I$ has compact level sets, i.e. for every $K<\infty$, $\{x \in \R: I(x)\le K\}$ is a compact set.
  \item[(ii)] For each closed subset $F \subset \R$,
              $$
                \limsup_{t\rightarrow \infty}\frac1{t}\log\mathbb P^{\mu} (e_{p}(t)\in F)\leq- \inf_{x\in F}I(x);
              $$
   \item[(iii)] For each open subset $G \subset \R$,
              $$
               \liminf_{t\rightarrow \infty}\frac1{t}\log\mathbb P^{\mu}(e_{p}(t)\in G)\geq- \inf_{x\in G}I(x),
              $$
   \end{itemize}    
  {where $\mathbb P^{\mu}$ is the measure under which $X_0$ is distributed according to $\mu$.}
\end{thm}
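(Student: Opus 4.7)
The plan is to apply the G\"artner-Ellis theorem to the scaled cumulant generating function
\[
\Lambda_t(\lambda) \doteq \frac{1}{t}\log \E^{\mu}\bigl[e^{\lambda t e_p(t)}\bigr],\qquad \lambda \in \R,
\]
and its limit $\Lambda(\lambda) \doteq \lim_{t\to\infty}\Lambda_t(\lambda) \in [-\infty,+\infty]$. Substituting \eqref{eprform} for $e_p(t)$ expresses $\exp(\lambda t e_p(t))$ as a Dol\'eans stochastic exponential times $\exp\bigl(\tfrac{\lambda(1+\lambda)}{2}\int_0^t |Q^{-1/2}N X_s|^2 ds\bigr)$. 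A Girsanov change of measure (valid for $\lambda$ in the interior of the finiteness domain, and whose Radon-Nikodym density contributes only an $O(1)$ boundary correction that vanishes after division by $t$) reduces $\E^{\mu}[e^{\lambda t e_p(t)}]$ to the exponential moment of a pure quadratic functional of an Ornstein-Uhlenbeck process whose drift is $A+\lambda N$. By the commuting-family property \eqref{e:CommQAEtAl} the eigenvalues of $A+\lambda N$ are $\alpha_k + i(1+2\lambda)\beta_k$, so this tilted process remains stable for every $\lambda \in \R$. This exponential moment is precisely the object for which Theorem \ref{t:ExpIntZ} and Proposition \ref{p:WienerDec} provide a second-order Wiener-It\^o chaos expansion, encoded in the symmetric compact operator $H_{\lambda,T}$ on $L^2([0,T];\C^d)$.

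Next I would compute $\Lambda(\lambda)$ using the spectral analysis of Section \ref{s:EigK}. The commutativity \eqref{e:CommQAEtAl} together with the normality of $A$ allows a simultaneous diagonalisation that decomposes $H_{\lambda,T}$ into $d$ scalar integral operators on $L^2([0,T];\C)$, one for each conjugate pair $\alpha_k \pm i\beta_k$ of eigenvalues of $A$. Their spectra are governed by $d$ Sturm-Liouville problems that admit explicit leading large-$T$ asymptotics; summing the contributions over $k$ and dividing by $T$ produces the closed form
\[
\Lambda(\lambda) = -\frac{1}{2}\sum_{k=1}^d \bigl(\sqrt{\alpha_k^2 - 4\lambda(1+\lambda)\beta_k^2}+\alpha_k\bigr)
\]
on the domain $\mcl D = \{\lambda \in \R : \alpha_k^2 - 4\lambda(1+\lambda)\beta_k^2 \ge 0 \text{ for all } k\}$, while for $\lambda \notin \mcl D$ the top eigenvalue of $H_{\lambda,T}$ attains the critical value $1$ exponentially fast in $T$ and so $\Lambda(\lambda) = +\infty$. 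Crucially $Q$ drops out of the spectrum, giving the $Q$-independence noted after the theorem statement; and the symmetry of $\mcl D$ about $\lambda = -\tfrac{1}{2}$ yields the Gallavotti-Cohen relation $\Lambda(\lambda) = \Lambda(-1-\lambda)$ at the level of the cumulant function.

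To apply G\"artner-Ellis one must then check essential smoothness. A direct differentiation gives
\[
\Lambda'(\lambda) = (1+2\lambda)\sum_{k=1}^d \frac{\beta_k^2}{\sqrt{\alpha_k^2 - 4\lambda(1+\lambda)\beta_k^2}},
\]
which diverges (with the sign of $1+2\lambda$) as $\lambda$ approaches either endpoint of $\mcl D$, since the radicand of the $k$ minimising $\alpha_k^2/\beta_k^2$ tends to zero. Steepness and differentiability on the interior of $\mcl D$ therefore hold, so G\"artner-Ellis produces the LDP with rate function $I = \Lambda^{*}$. To unpack this Legendre transform into the closed form \eqref{rate func}, substitute $\ell \doteq 4\lambda(1+\lambda)$ so that $1+\ell = (1+2\lambda)^2$; the stationarity condition $x = \Lambda'(\lambda)$ is then exactly \eqref{eq:absxeq} for $\ell_0(x)$, with optimiser $\lambda^{*}(x) = \tfrac{1}{2}\bigl(-1+\sqrt{1+\ell_0(x)}\bigr)$ when $x\ge 0$ and $\lambda^{*}(x) = \tfrac{1}{2}\bigl(-1-\sqrt{1+\ell_0(x)}\bigr)$ when $x<0$. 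Substituting back into $\lambda^{*} x - \Lambda(\lambda^{*})$ and using $|1+2\lambda^{*}| = \sqrt{1+\ell_0(x)}$ delivers \eqref{rate func}. The compactness of the level sets follows because $\ell_0(x) \to \min_k \alpha_k^2/\beta_k^2$ as $|x|\to\infty$, forcing $I(x)$ to grow at least linearly in $|x|$.

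The hardest step, in my view, is the spectral asymptotics at the core of the second paragraph: one must translate the abstract Wiener-It\^o representation of Proposition \ref{p:WienerDec} into the $d$ explicit Sturm-Liouville problems, extract their leading large-$T$ eigenvalue behaviour uniformly on compact subsets of $\mcl D$, pin down the exact critical threshold at which the exponential moment ceases to be finite, and quantify the rate of blow-up of $\Lambda'$ at $\partial \mcl D$ precisely enough to verify steepness. The commutativity hypothesis \eqref{e:AQComm} is exactly what makes this tractable, as it enables the simultaneous diagonalisation that reduces a single $d$-dimensional spectral problem to $d$ scalar problems solvable in closed form.
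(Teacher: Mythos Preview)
Your proposal is correct and follows essentially the same route as the paper: reduce $\Lambda_t$ via Girsanov to the quadratic functional $\tilde\Lambda_t$ of the tilted Ornstein--Uhlenbeck process (the paper's Proposition~\ref{p:PropTilD1}), compute $\tilde\Lambda$ from the Sturm--Liouville spectral asymptotics (Lemma~\ref{l:CalTlLam}), verify essential smoothness, and unpack the Legendre transform via the substitution $\ell=4\lambda(1+\lambda)$. The one point the paper treats more carefully than your sketch is the Girsanov step: rather than asserting validity on $\mcl D^\circ$, it localizes with stopping times $\tau_n=\inf\{t:|X_t|\ge n\}$ to secure Novikov's condition, then recovers $\Lambda_t=\tilde\Lambda_t$ on $\mcl D^\circ$ by a uniform-integrability argument (H\"older with a slightly larger $\bar\lambda\in\mcl D^\circ$), and handles the endpoints of $\mcl D$ separately by continuity of $\tilde\Lambda$ and a H\"older interpolation.
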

{\begin{remark}
	From \eqref{eq:impfact} it follows that $\min_{k=1,...,d}\set{\frac{\alpha_k^2}{\beta_k^2}} <\infty$.
	The uniqueness of solutions of  \eqref{eq:absxeq} is argued in the proof of Theorem \ref{t:EPR} given in Section \ref{ss:PLDP}.
\end{remark}}

In order to prove Theorem \ref{t:EPR} we will apply G\"{a}rtner-Ellis Theorem (cf. \cite{demzeit}). For this we  will need to compute the Cram\'er  function of $e_p(t)$. Define, for $t\ge 0$ and $\lambda \in \R$,
\begin{equation}  \label{e:LamTlam}
\begin{split}
\Lambda_t(\lambda)&:=\frac{1}{t}\log \E^{\mu} \exp\left\{t \lambda e_p(t)\right\} \\
&=\frac{1}{t}\log \E^{\mu}\exp\set{\lambda \int_{0}^t\,({Q}^{-\frac12} NX_s)^{'} \dif B_s  +\frac\lambda 2 \int_0^t |{Q}^{-\frac12} NX_s|^2 \dif s },
\end{split}
\end{equation}
{where $\E^{\mu}$ is the expectation function associated with $\mathbb{P}^{\mu}$.}
For  $\lambda \in \R$,  define
$$\Lambda(\lambda):=\lim_{t \rightarrow \infty} \Lambda_t(\lambda) \ \ \ {\rm provided}
\ \lim_{t \rightarrow \infty} \Lambda_t(\lambda) \  {\rm exists},$$
and 
$$\mathcal D_{\Lambda}:=\{\lambda \in \R: \Lambda(\lambda)<\infty\}.$$
$\Lambda$ is called the Cram\'er function \cite{demzeit} associated with large deviations of $e_p(t)$.

The following theorem gives a closed form expression for $\Lambda$ and its domain and also gives our main fluctuation theorem.
\begin{thm} [Fluctuation Theorem]   \label{t:FT}
Assume the same conditions as in Theorem \ref{t:EPR}.   
Let $\mcl D$ be the closed interval given as
$$\left[ -\frac12-\frac12\sqrt{1+ \min_{k=1,...,d}\set{\frac{\alpha_k^2 }{\beta_k^2}}},\, -\frac12+\frac12 \sqrt{1+\min_{k=1,...,d}\set{\frac{\alpha_k^2 }{\beta_k^2}}}\ \right].$$
Then, we have
\begin{equation}\label{eq:defnlam}
\Lambda(\lambda)=\begin{cases} 
-\frac 12\sum\limits_{k=1}^{d}   \left(\sqrt{ \alpha_k^2- 4\lambda(1+\lambda) \beta_k^2} +\alpha_k\right) \ \ \ &  \lambda \in \mcl D, \\
\infty, & \lambda \notin \mcl D,
\end{cases}
\end{equation}
in particular, $\mathcal D_{\Lambda} = \mcl D$.
Furthermore, $\Lambda$ and $ I$ satisfy the following Cohen-Gallavotti symmetry properties:
\begin{equation}\label{Cohen symmet} 
\Lambda(x)=\Lambda(-(1+x)),\quad I(x)=I(-x)-x,\, \mbox{ for all } x\in \R.
\end{equation}
\end{thm}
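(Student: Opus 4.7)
The plan is to combine the Wiener--It\^o chaos expansion representation of the pre-limit Cram\'er function established in Theorem \ref{t:ExpIntZ} and Proposition \ref{p:WienerDec} with the Sturm--Liouville spectral analysis of the operator $H_{\lambda,T}$ from Section \ref{s:EigK}, and then derive the Cohen--Gallavotti relations by an algebraic symmetry argument. The commuting family property \eqref{e:CommQAEtAl} allows one to simultaneously diagonalize $A$ into its complex conjugate eigenvalue pairs $\alpha_k \pm {\rm i}\beta_k$, which in turn decomposes the symmetric compact operator associated with $H_{\lambda,T}$ on $L^2([0,T];\C^d)$ into $d$ scalar operators on $L^2([0,T];\C)$, one per eigenvalue pair of $A$. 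The Fredholm-type representation of $\Lambda_t(\lambda)$ from the second-order chaos then factors as a sum over $k$ of scalar log-determinants.

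The next step is to extract the $T \to \infty$ limit. For each $k$, the eigenvalue problem for the scalar operator reduces to a second-order ODE whose characteristic exponent is
\[
\gamma_k(\lambda) \;=\; \sqrt{\alpha_k^2 - 4\lambda(1+\lambda)\beta_k^2}.
\]
Summing the logarithms of the corresponding eigenvalues, dividing by $T$, and tracking the dominant boundary contributions (using the sharp estimates provided in Section \ref{s:EigK}) yields
\[
\Lambda(\lambda) \;=\; \lim_{t\to\infty}\Lambda_t(\lambda) \;=\; -\tfrac12\sum_{k=1}^{d}\bigl(\gamma_k(\lambda)+\alpha_k\bigr),
\]
as asserted. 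The finiteness condition is $\gamma_k(\lambda)^2 \ge 0$ for every $k$, equivalently $4\lambda(1+\lambda)\le \min_k \alpha_k^2/\beta_k^2$, which upon solving the quadratic inequality in $\lambda$ produces precisely the interval $\mcl D$. For $\lambda \notin \mcl D$, divergence of $\Lambda_t(\lambda)$ is established by exhibiting a Sturm--Liouville eigendirection for $H_{\lambda,T}$ whose principal eigenvalue exceeds one for all sufficiently large $T$, forcing the associated Gaussian exponential moment to blow up; this extracts $\mcl D_\Lambda = \mcl D$.

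For the symmetry of $\Lambda$, observe the algebraic identity $\lambda(1+\lambda) = -(1+\lambda)\bigl(1-(1+\lambda)\bigr) = -(1+\lambda)(-\lambda)$, so the quadratic $\lambda(1+\lambda)$ is invariant under the involution $\lambda \mapsto -(1+\lambda)$; the interval $\mcl D$ is also invariant since it is symmetric about the fixed point $-\tfrac12$. Hence $\Lambda(\lambda) = \Lambda(-(1+\lambda))$ for all $\lambda \in \R$, with both sides equal to $+\infty$ off $\mcl D$. The corresponding symmetry of $I$ then follows by Fenchel--Legendre duality (available because $I = \Lambda^*$ by the G\"artner--Ellis step in the proof of Theorem \ref{t:EPR}): substituting $\mu = -(1+\lambda)$ in $I(-x) = \sup_\lambda\bigl(-\lambda x - \Lambda(\lambda)\bigr)$ and using the $\Lambda$-symmetry gives
\[
I(-x) \;=\; \sup_{\mu}\bigl((1+\mu)x - \Lambda(\mu)\bigr) \;=\; x + \sup_{\mu}\bigl(\mu x - \Lambda(\mu)\bigr) \;=\; x + I(x).
\]

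The main obstacle is the spectral/asymptotic work underlying the closed form: producing sharp, index-uniform eigenvalue estimates for the Sturm--Liouville problems and then showing that, after summation, the $1/T$ normalization selects exactly the leading boundary term $\tfrac12(\gamma_k(\lambda)+\alpha_k)$. A secondary subtlety is ruling out finiteness of $\Lambda(\lambda)$ on the boundary $\partial\mcl D$ and beyond, where a naive inspection of the closed-form expression is inconclusive and one must argue directly at the level of the Gaussian exponential moment.
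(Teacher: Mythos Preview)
Your outline captures the paper's overall architecture, but it omits one indispensable link: the passage from the true Cram\'er function $\Lambda_t(\lambda)$, which involves the stochastic integral $\int_0^t(NX_s)'\,dB_s$ in addition to the quadratic term, to the auxiliary functional $\tilde\Lambda_t(\lambda)=\frac1t\log\E^\mu\exp\bigl\{\tfrac12\lambda(1+\lambda)\int_0^t|Z_{\lambda,s}|^2\,ds\bigr\}$. Theorem~\ref{t:ExpIntZ} and Proposition~\ref{p:WienerDec} concern only the latter; they do not give a chaos representation of $te_p(t)$ itself. The identification $\Lambda=\tilde\Lambda$ (Proposition~\ref{p:PropTilD1}) is a separate and nontrivial step: one performs a Girsanov change of measure $dW_t=dB_t-\lambda NX_t\,dt$ so that the martingale part of $e_p(t)$ is absorbed into the Radon--Nikodym density, localizes via $\tau_n=\inf\{t:|X_t|\ge n\}$ to justify Novikov's condition, and then uses a uniform-integrability argument to pass to the limit on $\mcl D^\circ$, with a final H\"older/continuity sandwich for the endpoints. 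Without this, your ``Fredholm-type representation of $\Lambda_t(\lambda)$'' is not available.

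Two smaller points. First, you speak of ``ruling out finiteness of $\Lambda(\lambda)$ on the boundary $\partial\mcl D$'', but in fact $\Lambda$ \emph{is} finite there (the interval $\mcl D$ is closed and $\mcl D_\Lambda=\mcl D$); the subtle step is to \emph{establish} finiteness at the endpoints, done in Step~4 of Proposition~\ref{p:PropTilD1}. Second, the closed form does not arise from ``dominant boundary contributions'' but from a Riemann-sum limit: the eigenvalue spacing~\eqref{bdx} gives $\frac1T\sum_j\log(1-\theta\gamma_j^{(k)})\to\int_0^\infty\log\bigl(1-\tfrac{8\theta\beta_k^2}{\alpha_k^2+\pi^2x^2}\bigr)\,dx$, which is then evaluated via $\int_0^\infty\log\bigl(1-\tfrac{a}{b+x^2}\bigr)\,dx=(\sqrt{b-a}-\sqrt b)\pi$. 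Your Fenchel--Legendre derivation of $I(-x)=x+I(x)$ from the $\Lambda$-symmetry is correct and in fact cleaner than the paper's route, which simply reads off both symmetries from the explicit formulas~\eqref{eq:defnlam} and~\eqref{rate func}.
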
 
\vskip 3mm


We note that the  results in \cite{cgxx} follow as a special case of Theorems \ref{t:EPR} and \ref{t:FT}. 
Also, as noted in the Introduction, under our assumptions, the Cram\'{e}r function and the rate function for the LDP of $e_p(t)$
do not depend on the matrix $Q$ and since,  formally speaking, the deterministic system with $Q=0$ has zero EPR, the model exhibits a phase transition in that the EPR changes discontinuously at $Q =0$.


\subsection{An auxiliary SDE}   \label{ss:2.2}
From the definition of sample EPR in \eqref{eprform}, and noting from \eqref{e:CommQAEtAl} that $Q^{-\frac12} N=NQ^{-\frac 12}$, we see that
\Bes
\begin{split} 
e_p(t)&=\frac{1}{t}\left[\int_{0}^t\,(N {Q}^{-\frac12}X_s)^{'} \dif B_s  +\frac12 \int_0^t |N {Q}^{-\frac12}X_s|^2 \dif s\right] \\
&=\frac{1}{t}\left[\int_{0}^t\,(N \hat X_s)^{'} \dif B_s  +\frac12 \int_0^t |N \hat X_s|^2 \dif s\right],
\end{split}
\Ees
where $\hat X_{s}={Q}^{-\frac12}X_s$. Observe that $\hat X_{t}$ satisfies the  equation
\Bes
\dif \hat X_{t}=A \hat X_{t} \dif t+\dif B_{t},
\Ees
and the Markov process $\hat X_{t}$ admits a unique invariant measure $\hat \mu$ with a density function 
\begin{equation}\label{eq:density}
\begin{split}
\hat \rho(x)&=(2\pi)^{-\frac{d}{2}}(\mathrm{det}(\hat \Gamma))^{-\frac{1}{2}}\exp\left(-\frac{x^{'}\hat \Gamma^{-1}{x}}2\right)\\
&=(2\pi)^{-\frac{d}{2}}|\mathrm{det} (M)|^{\frac{1}{2}}\exp\left(\frac{x^{'}M{x}}2\right),
\end{split}
\end{equation}
where $\hat \Gamma=\int_0^{\infty}e^{{A}s}e^{{A}^{'} s}\dif s=-M^{-1}$.
Recalling the definition of  $\Lambda_{t}(\lambda)$ from \eqref{e:LamTlam}, we have 
\begin{equation}  
\begin{split}
\Lambda_t(\lambda)
&=\frac{1}{t}\log \E^{\hat \mu}\exp\set{\lambda \int_{0}^t\,(N \hat X_s)^{'} \dif B_s  +\frac\lambda 2 \int_0^t |N \hat X_s|^2 \dif s}. 
\end{split}
\end{equation}
From the above observation we see that, under our assumptions, the quantity $\Lambda_t(\lambda)$ and the distribution of $e_p(t)$ under the stationary distribution of $X$
are independent of the choice of $Q$. This fact explains why the rate function in the LDP and 
the Cram\'{e}r's function in the FT do not depend on $Q$. 
{\em In view of this invariance,
 in rest of this work we assume that $Q={\rm Id}$, i.e. the identity matrix, and, instead of \eqref{langevin0}, consider the equation}
\Be\label{eq:redu}
\dif X_{t}=AX_{t} \dif t+\dif B_{t}.
\Ee
Note that with the new definition, the  invariant measure $\mu$ of $X$ is  the measure $\hat \mu$ defined above.
\vskip 3mm 

In order to compute the Cram\'er function associated with $e_p(t)$, we introduce the following auxiliary equation. For $\lambda \in \R$, let $Y_{\lambda, t}$ solve the equation
\begin{equation}  \label{langevin new}
\begin{split}
\dif Y_{\lambda, t}=D_\lambda Y_{\lambda, t}\dif t +\dif W_t
\end{split}
\end{equation}
where
\Be  \label{e:BLam}
D_\lambda={A}+\lambda N, \ \ \ \ \lambda \in \R.
\Ee
By \eqref{e:CommQAEtAl}, it is clear that $D_{\lambda}D^{'}_{\lambda}=D^{'}_{\lambda}D_{\lambda}$ and 
$$M=D_{\lambda}+D^{'}_{\lambda}, \ \ \ \ \ \ \ \ \Gamma_{\lambda}:=\int_{0}^{\infty}  e^{D_{\lambda} s}e^{D_{\lambda}^{'} s}\dif s=-M^{-1}.$$
Thus for every $\lambda$, the Markov process $(Y_{\lambda,t})_{t \ge 0}$ has the same unique ergodic measure $\mu$  as that of $(X_{t})_{t \ge 0}$ (cf. \cite{wangxu}).
For  $\lambda \in \R$, define
\Be  \label{e:LamZt}
\tilde \Lambda_t(\lambda)=\frac{1}{t}\log \E^{\mu}\exp\set{\frac12\lambda(1+\lambda)\int_{0}^{t}\, |Z_{\lambda, s}|^{2} \dif s},
\Ee
where
\Be  \label{e:ZLams}
Z_{\lambda, s}=N Y_{\lambda, s},
\Ee
and $\E^{\mu}$ denotes the expectation with respect to the probability measure under which $Y_{\lambda, 0}$ is distributed
as $\mu$.
For $\lambda \in \R$, define
$$\tilde \Lambda(\lambda):=\lim_{t \rightarrow \infty} \tilde \Lambda_t(\lambda) \ \ \ {\rm provided} \ \lim_{t \rightarrow \infty} \tilde \Lambda_t(\lambda) \ {\rm exists},$$
and
$$\mathcal D_{\tilde \Lambda}:=\{\lambda \in \R: \tilde \Lambda(\lambda)<\infty\}.$$
We shall show in Proposition \ref{p:PropTilD1} that 
\Be  \label{e:TilD=D}
\mathcal D_{\tilde \Lambda}=\mathcal D_{\Lambda}, \ \ \ \ \ \ \ {\tilde \Lambda}={\Lambda}.
\Ee


\section{The analysis of $\int_{0}^T\,|Z_{\lambda, s}|^{2} \dif s$} \label{s:AnaZ}
In this section we will decompose  $\int_{0}^T\,|Z_{\lambda, s}|^{2} \dif s$ into its Wiener-It\^{o} chaos expansion 
  and compute its exponential moments by analyzing an operator associated with  its second order chaos.  
  For a function $\psi \in L^2([0,T]^2; \R^{d\times d})$, we write
  $\int_{[0,T]^2} dW'_{u_1} \psi(u_1, u_2) dW_{u_2}$ for the $\R$ valued random variable 
  defined as
  $$\sum_{i,j=1}^d\int_{0}^T \left(\int_0^t \psi_{ij}(s, t) dW_{s}^i\right) dW_{t}^j
  + \sum_{i,j=1}^d\int_{0}^T \left(\int_0^s \psi_{ij}(s, t) dW_{t}^j\right) dW_{s}^i.$$
Stochastic integrals of the form
$$\int_0^{u_1}   dW'_{u_1} \psi(u_1, u_2), \; \int_0^{u_2}    \psi(u_1, u_2) dW_{u_2}$$
are $\R^d$ valued random variables interpreted in a similar manner.
  
\begin{prop} \label{p:WienerDec}
	Let, for $\lambda \in \R$,
	$Y_{\lambda,t}$ and $Z_{\lambda,t}$ be as defined in \eqref{langevin new} and \eqref{e:ZLams} respectively.
Suppose that for some $x\in \R^d$,  $Y_{\lambda,0}=x$. Then, for all $T\ge 0$, we have
   \begin{equation}\label{hts}
      \int_{0}^T\,|Z_{\lambda, s}|^{2} \dif s=S_{0}^x(T)+S_{\lambda,1}^x(T)+S_{\lambda,2}(T),
   \end{equation}
   where
   \begin{equation}\label{eq:soxt}
	   S_{0}^x(T)=\left|\left(\int_0^T e^{sM} \dif s\right)^{\frac 12}       N x\right|^{2}+\int_{0}^{T} \tr\left[N^{'} e^{uM} {N}\right] (T-u) \dif u,\end{equation}
   \Bes
   \begin{split}
   & S_{\lambda,1}^x(T)=\int_0^T (G_{\lambda,T}^{x}(u))' \dif W_{u},  \ \ \ \  \ \ \ \ \ S_{\lambda,2}(T)=\frac12 \int_{[0,T]^2} \dif W_{u_1}^{'} H_{\lambda,T}(u_1,u_2) \dif W_{u_2},
   \end{split}
   \Ees
   with  $$G^x_{\lambda,T}(u)=2  N^{'} e^{-u {D^{'}_\lambda}} \left(\int_{u}^{T} e^{Ms} \dif s\right)       {N}x,$$
   \begin{align*}
      H_{\lambda,T}(u_1,u_2)&=2 \left(e^{- u_1 {D_\lambda}} N\right)^{'} \left(\int_{u_1\vee u_2}^T\,e^{ {M}t}\dif t\right) e^{- u_2 {D_\lambda}} N.
   \end{align*}
\end{prop}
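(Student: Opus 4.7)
The plan is to use the explicit mild solution of the linear SDE \eqref{langevin new} and then expand $|Z_{\lambda,s}|^{2}$ into a deterministic part, a single stochastic integral, and a double stochastic integral, after which an application of stochastic Fubini and the commutativity property \eqref{e:CommQAEtAl} will produce the three summands on the right-hand side of \eqref{hts}.

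Concretely, I would first write
\[
Y_{\lambda,s} \;=\; e^{sD_\lambda}x+\int_{0}^{s} e^{(s-u)D_\lambda}\,\dif W_u,
\qquad
Z_{\lambda,s}\;=\;a_s+m_s,
\]
with $a_s\doteq Ne^{sD_\lambda}x$ (deterministic) and $m_s\doteq \int_{0}^{s} Ne^{(s-u)D_\lambda}\,\dif W_u$ (centered Gaussian). Then $|Z_{\lambda,s}|^{2}=|a_s|^{2}+2\,a_s'm_s+|m_s|^{2}$, and I would treat the three resulting time integrals separately.

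For the deterministic piece, the invariance property that $D_\lambda$, $D_\lambda'$ and $N$ are mutually commuting (consequence of \eqref{e:CommQAEtAl}, since $D_\lambda=A+\lambda N$) gives
\[
e^{sD_\lambda'}N'Ne^{sD_\lambda}\;=\;N'Ne^{s(D_\lambda+D_\lambda')}\;=\;N'Ne^{sM},
\]
so $\int_{0}^{T}|a_s|^{2}\,\dif s=x'N'\big(\int_{0}^{T}e^{sM}\dif s\big)Nx=\big|\big(\int_{0}^{T}e^{sM}\dif s\big)^{1/2}Nx\big|^{2}$, where the square root is well defined because $M$ is symmetric negative definite. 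For the mixed piece I would apply stochastic Fubini to interchange $\dif s$ with $\dif W_u$, obtaining
\[
\int_{0}^{T}2\,a_s'm_s\,\dif s=\int_{0}^{T}\!\Big(2\,x'N'N\,e^{-uD_\lambda}\!\int_{u}^{T}\!e^{sM}\,\dif s\Big)\,\dif W_u,
\]
after again collapsing $e^{sD_\lambda'+(s-u)D_\lambda}=e^{sM}e^{-uD_\lambda}$ by commutativity; rearranging factors using the commuting family identifies the integrand with $(G^{x}_{\lambda,T}(u))'$, giving $S_{\lambda,1}^{x}(T)$.

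The main work lies in the quadratic piece $\int_{0}^{T}|m_s|^{2}\dif s$. Here I would use the Itô product formula coordinate-wise: writing $m_s^{i}=\sum_{j}\int_{0}^{s}\phi_{ij}(s-u)\dif W_u^{j}$ with $\phi(r)\doteq Ne^{rD_\lambda}$, the identity
\[
\Big(\!\int_{0}^{s}\!f\,\dif W^{j}\Big)\!\Big(\!\int_{0}^{s}\!g\,\dif W^{k}\Big)=\delta_{jk}\!\int_{0}^{s}\!fg\,\dif u+\!\int_{[0,s]^{2}}\!f(u_1)g(u_2)\,\dif W^{j}_{u_1}\dif W^{k}_{u_2}
\]
applied with $f=\phi_{ij}(s-\cdot)$, $g=\phi_{ik}(s-\cdot)$ and summed over $i,j,k$ yields
\[
|m_s|^{2}=\int_{0}^{s}\tr\!\big[\phi(s-u)'\phi(s-u)\big]\dif u+\int_{[0,s]^{2}}\dif W_{u_1}'\,\phi(s-u_1)'\phi(s-u_2)\,\dif W_{u_2}.
\]
Using $\phi(r)'\phi(r)=N'Ne^{rM}$ and the change of variables $v=s-u$, the deterministic contribution integrates to $\int_{0}^{T}\tr[N'e^{vM}N](T-v)\dif v$, which is the second summand of $S_{0}^{x}(T)$. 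For the double integral, stochastic Fubini pushes $\dif s$ inside the double Itô integral, and the region $\{0\le u_1,u_2\le s\le T\}$ becomes $s\in[u_1\vee u_2,T]$ for fixed $(u_1,u_2)$, producing
\[
\int_{u_1\vee u_2}^{T}\phi(s-u_1)'\phi(s-u_2)\,\dif s=N'Ne^{-u_1D_\lambda'-u_2D_\lambda}\!\int_{u_1\vee u_2}^{T}\!e^{sM}\,\dif s=\tfrac12 H_{\lambda,T}(u_1,u_2),
\]
by splitting $e^{-u_1D_\lambda'-u_2D_\lambda}=e^{-u_1D_\lambda'}\,e^{-u_2D_\lambda}$ (legal since the two matrices commute) and rearranging the $N$'s. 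This identifies the last term with $S_{\lambda,2}(T)$, completing the decomposition.

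The only delicate step is the application of stochastic Fubini to the double Itô integral in the $|m_s|^{2}$ term; I would justify it via a standard $L^{2}$ estimate on $\phi(s-u_1)'\phi(s-u_2)$ on the compact region $[0,T]^{3}$, which is straightforward since $\phi$ is continuous. Everything else is algebraic manipulation enabled by the commutativity family \eqref{e:CommQAEtAl}.
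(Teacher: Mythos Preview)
Your proposal is correct and follows essentially the same route as the paper's proof: both write the mild solution, expand $|Z_{\lambda,s}|^{2}$ into its deterministic, linear, and quadratic pieces, and then use the commutativity relations (in particular $D_\lambda+D_\lambda'=M$) together with stochastic Fubini to identify $S_0^x$, $S_{\lambda,1}^x$, and $S_{\lambda,2}$. The only cosmetic difference is that the paper first factors the $s$-dependence of the quadratic piece into a middle $e^{sM}$ and then applies It\^{o}'s formula to the semimartingale $r\mapsto\big(\int_0^r e^{-uD_\lambda}N\,\dif W_u\big)'e^{sM}\big(\int_0^r e^{-uD_\lambda}N\,\dif W_u\big)$, whereas you invoke the Wiener product identity directly on $m_s$ with the $s$-dependent integrand $\phi(s-\cdot)$; the two computations are equivalent.
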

\begin{proof}  
   From (\ref{langevin new}), for all $s\ge 0$, 
   \begin{equation*}
      Y_{\lambda, s}=e^{s {D_\lambda}}x+ \int_0^s e^{(s-u) {D_\lambda}}  \dif W_u,
    \end{equation*}
 which implies that
   \begin{equation*}
    Z_{\lambda, s}=        {N} Y_{\lambda, s}=       {N}e^{s {D_\lambda}}x+ \int_0^s e^{(s-u) {D_\lambda}} {N}\dif W_u.
   \end{equation*}
   where we have used $N D_{\lambda}=D_{\lambda} N$ (which is a consequence of \eqref{e:CommQAEtAl}). 
Thus,
   \begin{equation}
   |Z_{\lambda, s}|^{2}
    =|      {N}e^{s {D_\lambda}}x|^{2}+2   (      {N}e^{s {D_\lambda}}x)^{'}\int_0^s e^{(s-u) {D_\lambda}} N\dif W_u+\left|\int_0^s e^{(s-u) {D_\lambda}} {N}\dif W_u\right|^{2}.  \label{e:Z^2}
   \end{equation}
  Let us first consider the last term on the right hand side of \eqref{e:Z^2}.
   By \eqref{e:CommQAEtAl} the matrices $N$, $D_\lambda$ and $ D^{'}_{\lambda}$ commute and thus   recalling that $D^{'}_\lambda+D_\lambda=M$, we have   
    \begin{align*}
  \left|\int_0^s e^{(s-u) {D_\lambda}} {N}\dif W_u\right|^{2}&= \left(\int_0^s e^{-u D_\lambda} {N}\dif W_u\right)^{'} e^{s(D^{'}_\lambda+D_\lambda)} \left(\int_0^s e^{-u {D_\lambda}} {N}\dif W_u\right)  \\
 &=\left(\int_0^s e^{-u D_\lambda} {N}\dif W_u\right)^{'} e^{s M} \left(\int_0^s e^{-u {D_\lambda}} {N}\dif W_u\right).
   \end{align*}
For fixed $s$, applying It\^{o}'s formula to the semimartingale
 $$r\mapsto \left(\int_0^r e^{-u D_\lambda} {N}\dif W_u\right)^{'} e^{s M} \left(\int_0^r e^{-u {D_\lambda}} {N}\dif W_u\right),$$  we have
   \begin{align*}
& \ \ \left(\int_0^r e^{-u D_\lambda} {N}\dif W_u\right)^{'} e^{s M} \left(\int_0^r e^{-u {D_\lambda}} {N}\dif W_u\right) \\
&=2\int_{0}^{r} \left(\int_0^u e^{-v D_\lambda} {N}\dif W_v\right)^{'} e^{s M}  e^{-u {D_\lambda}} {N}\dif W_u+\int_{0}^{r} \tr[N^{'} e^{-u D^{'}_\lambda} e^{s M} e^{-u {D_\lambda}} {N}] \dif u \\
&=2\int_{0}^{r} \left(\int_0^u e^{-v D_\lambda} {N}\dif W_v\right)^{'} e^{s M}  e^{-u {D_\lambda}} {N}\dif W_u+\int_{0}^{r} \tr[N^{'} e^{(s -u)M} {N}] \dif u,
   \end{align*}
   where the last equality is once more from the fact that the matrices $N$,  $D_\lambda$ and $ D^{'}_{\lambda}$ commute, and that $D^{'}_{\lambda} +D_{\lambda}=M$. Taking $r=s$ in the above expression yields
   \begin{align}\label{eq:tresterms}
  \left|\int_0^s e^{(s-u) {D_\lambda}} {N}\dif W_u\right|^{2}&=2\int_{0}^{s} \left(\int_0^u e^{-v D_\lambda} {N}\dif W_v\right)^{'} e^{s M}  e^{-u {D_\lambda}} {N}\dif W_u+\int_{0}^{s} \tr[N^{'} e^{(s -u)M} {N}] \dif u.
   \end{align}
 The integral of the first  term on the right hand side of \eqref{eq:tresterms} can be written as
 \begin{align*}
 &  \ \ \ 2\int_{0}^{T}\int_{0}^{s} \left(\int_0^u e^{-v D_\lambda} {N}\dif W_v\right)^{'} e^{s M}  e^{-u {D_\lambda}} {N}\dif W_u \dif s \\
 &=\int_{0}^{T}\int_{0}^{s} \left(\int_0^u e^{-v D_\lambda} {N}\dif W_v\right)^{'} e^{s M}  e^{-u {D_\lambda}} {N}\dif W_u \dif s\\
 &\quad +\int_{0}^{T}\int_{0}^{s} \left(\int_0^v e^{-u D_\lambda} {N}\dif W_u\right)^{'} e^{s M}  e^{-v {D_\lambda}} {N}\dif W_v \dif s \\
  &=\int_{0}^{T} \left(\int_0^u \dif W_v^{'}e^{-v D_\lambda^{'}} {N^{'}}\right) \left(\int_{u}^{T} e^{s M} \dif s\right) e^{-u {D_\lambda}} {N}\dif W_u \\
  &\ \ \ \ \ +\int_{0}^{T} 
  \dif W_v^{'}\left[\left(\int_{v}^{T}e^{s M} \dif s\right) e^{-v {D_\lambda^{'}}} {N^{'}}
  \left(\int_0^v e^{-u D_\lambda} {N}\dif W_u\right)\right]  \\
  &=\frac12 \int_{[0,T]^2} \dif W_{u_1}^{'} H_{\lambda,T}(u_1,u_2) \dif W_{u_2},
   \end{align*}
where the  second equality  is by the (stochastic) Fubini Theorem (cf. \cite[Section 3.7]{karshr}).
The integral of the second term on the right side of \eqref{eq:tresterms} can be written as
   \begin{align*}
  \int_{0}^{T}\int_{0}^{s} \tr[N^{'} e^{(s -u)M} {N}] \dif u \dif s= \int_{0}^{T}\int_{0}^{s} \tr[N^{'} e^{u M} {N}] \dif u \dif s= \int_{0}^{T} \tr\left[N^{'} e^{uM} {N}\right] (T-u) \dif u.
   \end{align*}
 Combining the above observations the integral of the left side of \eqref{eq:tresterms} is given as
  \begin{align*}
  \int_{0}^{T}\left|\int_0^s e^{(s-u) {D_\lambda}} {N}\dif W_u\right|^{2} \dif s=\frac12 \int_{[0,T]^2} \dif W_{u_1}^{'} H_{\lambda,T}(u_1,u_2) \dif W_{u_2}+\int_{0}^{T} \tr\left[N^{'} e^{uM} {N}\right] (T-u) \dif u.
   \end{align*}
For the first two terms on the right hand of \eqref{e:Z^2},
integrating over $s\in[0,T]$,
we have
   \begin{align*}
  \int_0^T |      {N}e^{s {D_\lambda}}x|^{2} \dif s&= \int_0^T x^{'} e^{sD^{'}_{\lambda}} N^{'}            N e^{sD_{\lambda}} x \dif s \\
  &=\int_0^T x^{'} N^{'}      e^{sM}       N x \dif s=\left|\left(\int_0^T e^{sM} \dif s\right)^{\frac 12}       N x\right|^{2}
   \end{align*}
   and
    \begin{align*}
   2\int_{0}^{T}(      {N}e^{s {D_\lambda}}x)^{'}\int_0^s e^{(s-u) {D_\lambda}}N \dif W_u \dif s&=2\int_{0}^{T}(      {N}x)^{'} e^{sM}\int_0^s e^{-u {D_\lambda}}N \dif W_u \dif s \\
   &=2\int_{0}^{T} (      {N}x)^{'} \left(\int_{u}^{T} e^{Ms} \dif s\right) e^{-u {D_\lambda}}N \dif W_{u} \\
   &=2\int_{0}^{T} \left[N^{'} e^{-u {D^{'}_\lambda}} \left(\int_{u}^{T} e^{Ms} \dif s\right)       {N}x\right]^{'}  \dif W_{u},
\end{align*}
where the second equality once more uses the (stochastic) Fubini theorem.
Combining the previous three relations with \eqref{e:Z^2}, we immediately get the desired identity in the proposition.
\end{proof}

Let $T>0$ and $\lambda \in \R$. Recall the function $H_{\lambda,T}: [0,T] \times [0,T] \rightarrow \R^{d \times d}$  defined in Proposition \ref{p:WienerDec}. Define the operator $K_{\lambda,T}: L^2([0,T]; \R^d) \rightarrow L^2([0,T]; \R^d)$ by
\Be \label{e:KOperator}
K_{\lambda,T} f(t)=\int_0^T H_{\lambda,T}(t,s) f(s) \dif s, \ \ \ \ \  \ f \in L^2([0,T];\R^d),
\Ee
\begin{lem}  \label{l:KTEig}
$K_{\lambda,T}$ is a nonnegative, symmetric, trace class  operator on $L^2([0,T]; \R^d)$. Consequently there exist a standard orthonormal basis $\{e_n\}_{n \ge 1}$ of $L^{2}([0,T];\R^{d})$ and a real  sequence (possibly depending on
$\lambda$ and $T$) $\gamma_{1} \ge \gamma_{2}  \ge ... \ge \gamma_n \ge ...\ge 0$,  such that
\Be  \label{e:KTEig}
\begin{split}
& H_{\lambda,T}(s,t)={\sum_{n\ge 1}} \gamma_{n} e_n(s) \otimes e_n(t), \quad 0 \le s, t\le T,\\
& K_{\lambda,T} e_n=\gamma_n e_n, \ \ \ \ \ \ n=1,2,...,
\end{split}
\Ee
where for $a,b \in \R^d$, $a \otimes b$ denotes a $d \times d$ matrix  whose $(i,j)$-th entry is $a_ib_j$.
Moreover, we have
\Be  \label{e:TrKTMer}
\begin{split}
{\rm tr}(K_{\lambda,T})&=\int_{0}^{T} {\rm tr}\left(H_{\lambda,T}(u,u)\right) \dif u \\
&=2\left[{\rm tr}(N^{'} M^{-1} (e^{MT}-{\rm Id}) M^{-1} N)-T{\rm tr}(N^{'} M^{-1} N)\right].
\end{split}
\Ee
\end{lem}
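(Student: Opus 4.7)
The strategy is to recast the kernel $H_{\lambda,T}$ in a manifestly symmetric, nonnegative form using the commutation identities in \eqref{e:CommQAEtAl}, then invoke Mercer's theorem for continuous, symmetric, nonnegative matrix-valued kernels, and finally evaluate the trace by direct computation.

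First, I would exploit that, by \eqref{e:CommQAEtAl}, $D_\lambda = A + \lambda N$ is normal, commutes with $N$, and satisfies $D_\lambda + D_\lambda' = M$ with $M' = M$. Since commuting matrices satisfy $e^{X+Y} = e^X e^Y$, we obtain, for $s,t,u \in [0,T]$,
\begin{equation*}
e^{-sD_\lambda'}\, e^{uM}\, e^{-tD_\lambda} = e^{-sD_\lambda' + uM - tD_\lambda} = e^{(u-s)D_\lambda'}\, e^{(u-t)D_\lambda},
\end{equation*}
and hence the compact representation
\begin{equation*}
H_{\lambda,T}(s,t) = 2\int_{s \vee t}^T \bigl(e^{(u-s)D_\lambda} N\bigr)'\, \bigl(e^{(u-t)D_\lambda} N\bigr)\, du.
\end{equation*}
Symmetry in the sense $H_{\lambda,T}(s,t)' = H_{\lambda,T}(t,s)$ is then immediate, which translates to self-adjointness of $K_{\lambda,T}$. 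For nonnegativity, given $f \in L^2([0,T];\R^d)$, I substitute this representation into $\langle K_{\lambda,T} f, f\rangle$ and swap the orders of integration: the constraint $s \vee t \le u$ is equivalent to $s \le u$ and $t \le u$, which after a Fubini argument yields
\begin{equation*}
\langle K_{\lambda,T} f, f\rangle = 2\int_0^T \left| \int_0^u e^{(u-s)D_\lambda} N f(s)\, ds \right|^2 du \;\ge\; 0.
\end{equation*}

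The kernel $H_{\lambda,T}$ is continuous on $[0,T]^2$ (being built out of matrix exponentials and their integrals), so by the vector-valued version of Mercer's theorem, $K_{\lambda,T}$ is a positive trace class operator with the uniformly convergent expansion $H_{\lambda,T}(s,t) = \sum_n \gamma_n e_n(s) \otimes e_n(t)$ along eigenpairs $(\gamma_n, e_n)$ with $\gamma_n \ge 0$, establishing \eqref{e:KTEig}. Moreover, Mercer's theorem also gives $\tr(K_{\lambda,T}) = \int_0^T \tr(H_{\lambda,T}(u,u))\, du$. To compute this integral, I set $s=t=u$ in the representation above, change variables $v = r - u$, and use commutation once more to obtain $H_{\lambda,T}(u,u) = 2 N' \int_0^{T-u} e^{vM}\, dv\, N$. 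Since $M$ is invertible (negative definite), $\int_0^{T-u} e^{vM} dv = M^{-1}(e^{(T-u)M} - \mathrm{Id})$. Substituting $v = T - u$ in the outer integral, applying cyclicity of the trace, and using commutativity of $M^{-1}$, $e^{vM}$, $N$, $N'$ gives \eqref{e:TrKTMer} after a short calculation.

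The main obstacle is purely bookkeeping: carrying the commutation identities from \eqref{e:CommQAEtAl} through cleanly enough to fold three exponentials into one factored expression, so that both the sum-of-squares structure for nonnegativity and the simple diagonal kernel for the trace computation drop out. Once the kernel is in the rewritten form, all remaining steps are standard consequences of Mercer's theorem together with the matrix identity $\int_0^T e^{vM}\,dv = M^{-1}(e^{TM}-\mathrm{Id})$.
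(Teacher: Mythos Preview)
Your argument is correct and, for the nonnegativity step, takes a cleaner route than the paper. The paper establishes nonnegativity by first diagonalizing $M$ via an orthogonal matrix $P$, setting $g(t)=P e^{-tD_\lambda}N f(t)$, and then reducing $\langle K_{\lambda,T}f,f\rangle$ to a sum of scalar integrals of the form $\int_0^T e^{\eta_i t}F_i^2(t)\,dt$ (with $F_i$ an antiderivative of $g_i$) through an explicit integration-by-parts computation. Your factorization $H_{\lambda,T}(s,t)=2\int_{s\vee t}^T (e^{(u-s)D_\lambda}N)'(e^{(u-t)D_\lambda}N)\,du$ bypasses both the diagonalization and the integration by parts, yielding the sum-of-squares form $2\int_0^T\bigl|\int_0^u e^{(u-s)D_\lambda}Nf(s)\,ds\bigr|^2 du$ directly after one Fubini swap. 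This is more transparent and also makes the correct matrix-kernel symmetry condition $H_{\lambda,T}(s,t)'=H_{\lambda,T}(t,s)$ manifest. The remaining steps---invoking Mercer's theorem and computing the trace via $e^{-uD_\lambda'}e^{-uD_\lambda}=e^{-uM}$---are essentially identical to the paper's.
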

\begin{proof}
The symmetry of $K_{\lambda,T}$ is immediate from the symmetry of the function $H_{\lambda,T}$ (namely the property
$H_{\lambda,T}(s,t)= H_{\lambda,T}(t,s)$ for $(s,t) \in [0,T]^2$).
 {Now we show that $K_{\lambda, T}$ is nonnegative, i.e. for any  $f \in L^2([0,T];\R^d)$, the following relation holds:
\Be  \label{e:NonKT}
\int_{[0,T]^{2}}  (f(u_{1}))' H_{\lambda,T} (u_{1},u_{2}) f(u_{2}) \dif u_{1} \dif u_{2} \ge 0.
\Ee}
Since $M$ is a symmetric matrix, there exists some orthogonal matrix $P$ such that
$$P M P^{'}= \mbox{diag}\{\eta_1,...,\eta_d\} = \mbox{diag} (\bm{\eta})  \ \  {\rm with} \ \  \eta_i \in \R \ {\rm for}  \ \ 1 \le i \le d.$$
Therefore, for any $f \in L^2([0,T];\R^d)$, denote $g(t)=Pe^{-t D_{\lambda}} N f(t)$,
we have
\Be\label{eq:eq3.8}
\begin{split}
\int_{[0,T]^{2}}  (f(u_{1}))' H_{\lambda,T} (u_{1},u_{2}) f(u_{2}) \dif u_{1} \dif u_{2} &=2\int_{[0,T]^{2}}  (g(u_{1}))'  \left(\int_{u_{1} \vee u_{2}}^T e^{\mbox{diag}(\bm{\eta}) t}  \dif t\right) g(u_{2}) \dif u_{1} \dif u_{2}  \\
&=2\sum_{i=1}^{d}  \int_{[0,T]^{2}}  g_{i}(u_{1})  \left(\int_{u_{1} \vee u_{2}}^T e^{\eta_{i} t}  \dif t\right) g_{i}(u_{2}) \dif u_{1} \dif u_{2},
\end{split}
\Ee
where $g_{i}(t)$ is the $i$-th element of $g(t)$. 
For  $1 \le i \le d$, 
\Bes
\begin{split}
 & \ \ \ \ \ \ \int_{[0,T]^{2}}  g_{i}(u_{1})  \left(\int_{u_{1} \vee u_{2}}^T e^{\eta_{i} t}  \dif t\right) g_{i}(u_{2}) \dif u_{1} \dif u_{2} \\
 &=2 \int_{0}^{T} \int_{0}^{u_{2}}  g_{i}(u_{1})
 \dif u_{1} \left(\int_{u_{2}}^T e^{\eta_{i} t}  \dif t\right) g_{i}(u_{2}) \dif u_{2} \\
 &=\frac 2{\eta_{i}} \left[\int_{0}^{T} \int_{0}^{u_{2}}  g_{i}(u_{1})
 \dif u_{1} e^{\eta_{i} T}g_{i}(u_{2}) \dif u_{2}-\int_{0}^{T} \int_{0}^{u_{2}}  g_{i}(u_{1})
 \dif u_{1} e^{\eta_{i} u_{2}}g_{i}(u_{2}) \dif u_{2}\right]\\
\end{split}
\Ees
Letting $F_{i}(t)=\int_{0}^{t} g_{i}(s) \dif s$,  by a straightforward calculation, we get
\Bes
\begin{split}
& \int_{0}^{T} \int_{0}^{u_{2}}  g_{i}(u_{1})
 \dif u_{1} e^{\eta_{i} T}g_{i}(u_{2}) \dif u_{2}=\frac12 e^{\eta_{i} T} F^{2}_{i}(T), \\
 & \int_{0}^{T} \int_{0}^{u_{2}}  g_{i}(u_{1}) \dif u_{1} e^{\eta_{i} u_{2}}g_{i}(u_{2}) \dif u_{2}=
 \frac12 e^{\eta_{i} T} F^{2}_{i}(T)-\frac{\eta_{i}}2 \int_{0}^{T} e^{\eta_{i} t} F^{2}_{i}(t) \dif t,
\end{split}
\Ees
thus,
\Bes
\begin{split}
 & \int_{[0,T]^{2}}  g_{i}(u_{1})  \left(\int_{u_{1} \vee u_{2}}^T e^{\eta_{i} t}  \dif t\right) g_{i}(u_{2}) \dif u_{1} \dif u_{2}= \int_{0}^{T} e^{\eta_{i} t} F^{2}_{i}(t) \dif t.
\end{split}
\Ees
Using the above identities in \eqref{eq:eq3.8}, we have the desired inequality in  \eqref{e:NonKT}.

Since $K_{\lambda, T}$ is a nonnegative symmetric operator,  by Mercer's Theorem \cite[Theorem 16.7.1]{Gar07}, there exist a complete orthonormal system $\{e_n\}_{n \ge 1}$ in
$L^2([0,T];\R^{d})$ and a sequence of nonnegative reals $\{\gamma_n\}_{n \ge 1}$ such that $\gamma_{1} \ge \gamma_{2} \ge ....$ and that
\eqref{e:KTEig} holds.
Moreover,
\Bes  
\begin{split}
\int_{0}^{T} {\rm tr}\left(H_{\lambda,T}(u,u)\right) \dif u 
&=2 \int_{0}^{T} {\rm tr}\left[N^{'} e^{- u {D^{'}_\lambda}} \left(\int_{u}^T\,e^{ {M}t}\dif t\right) e^{- u {D_\lambda}} N\right] \dif u \\
&=2 \int_{0}^{T} {\rm tr}\left[N^{'} e^{-u M} \left(\int_{u}^T\,e^{ {M}t}\dif t\right) N\right] \dif u \\
&=2 \int_{0}^{T} {\rm tr}\left[N^{'}  M^{-1}\left(e^{ {M}(T-u)}-{\rm Id}\right) N\right] \dif u \\
&=2\left[{\rm tr}(N^{'} M^{-1} (e^{MT}-{\rm Id}) M^{-1} N)-T{\rm tr}(N^{'} M^{-1} N)\right],
\end{split}
\Ees
where the second equality is by using the the commuting property of matrices  $M$, $D_\lambda$ and $D'_\lambda$, and the fact that $D_\lambda+D'_\lambda=M$. This shows that $K_{\lambda,T}$ is a trace class operator and that ${\rm tr}(K_{\lambda,T})$ is given by
\eqref{e:TrKTMer}.
\end{proof}

Lemma \ref{l:KTEig} shows that $K_{\lambda,T}$ is a symmetric compact (in fact trace class) operator with the spectrum $\sigma(K_{\lambda,T})$ given as 
\Be  \label{e:SigKT}
\sigma(K_{\lambda,T})=\{\gamma_{1}, \gamma_{2},...\}
\Ee 
 and  ${\rm tr}(K_{\lambda,T}) = \sum_{i=1}^{\infty}\gamma_i <\infty$.
\begin{thm} \label{t:ExpIntZ}
	Let, for $\lambda \in \R$,
$Y_{\lambda,t}$ and $Z_{\lambda,t}$ be as defined in \eqref{langevin new} and \eqref{e:ZLams} respectively.
Suppose that for some $x\in \R^d$,  $Y_{\lambda,0}=x$.
Then for any $\theta\in \left(-\infty, \frac{1}{\gamma_{1}}\right)$, where  $\gamma_{i}$ are as in Lemma \ref{l:KTEig}, we have
\begin{equation}\label{suanyi}
\begin{split}
   &\E^{x} \exp\left(\theta\int_{0}^{T} |Z_{\lambda,s}|^{2} \dif s\right)\\
  &= \frac{1}{\sqrt{\mathrm{det}({\rm Id}-\theta K_{\lambda,T})}}  \\
    & \quad \times \exp\left[\theta S_{0}^x(T)-\frac \theta 2  \tr(K_{\lambda,T})+\frac{\theta^2}2 \Ll G_{\lambda,T}^x, ({\rm Id}-\theta K_{\lambda,T})^{-1}G_{\lambda,T}^x\Rr_{L^{2}([0,T]; \R^{d})}\right],
\end{split}
\end{equation}
where $S_{0}^x$ and $G_{\lambda,T}^x$ are as in Proposition \ref{p:WienerDec} and 
\Be  \label{e:DetRep}
\  \frac{1}{\sqrt{\mathrm{det}({\rm Id}-\theta K_{\lambda,T})}}=\prod_{n=1}^\infty \frac{1}{\sqrt{1-\theta \gamma_n}}. 
\Ee
 Moreover,  for all $\theta \ge \frac{1}{\gamma_{1}}$, we have
\begin{equation}\label{suanyi-1}
\begin{split}
   \E^{x}\exp\left(\theta\int_{0}^{T}  |Z_{\lambda,s}|^{2} \dif s\right)=\infty.
\end{split}
\end{equation}
\end{thm}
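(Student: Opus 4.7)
The plan is to diagonalize the quadratic form using the spectral data from Lemma \ref{l:KTEig} and reduce to a product of independent scalar Gaussian moment generating functions. Starting from Proposition \ref{p:WienerDec}, the integral splits as $S_0^x(T) + S_{\lambda,1}^x(T) + S_{\lambda,2}(T)$, with the first piece deterministic and the remaining two a first and a second Wiener--It\^o chaos in $W$. Since $S_0^x(T)$ is a constant, it pulls out of the expectation as the factor $e^{\theta S_0^x(T)}$, and the task reduces to computing the joint Laplace transform of $S_{\lambda,1}^x(T)+S_{\lambda,2}(T)$.

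The next step is to introduce the i.i.d.\ $N(0,1)$ sequence $\xi_n := \int_0^T (e_n(u))' \, dW_u$, where $\{e_n\}_{n\ge 1}$ is the orthonormal basis of $L^2([0,T];\R^d)$ from Lemma \ref{l:KTEig}. Writing $G^x_{\lambda,T} = \sum_n g_n e_n$ with $g_n = \langle G^x_{\lambda,T}, e_n\rangle$, the first chaos term becomes
\begin{equation*}
S_{\lambda,1}^x(T) = \sum_n g_n \xi_n .
\end{equation*}
For the second chaos, I would use the Mercer expansion $H_{\lambda,T}(s,t) = \sum_n \gamma_n e_n(s) \otimes e_n(t)$ to establish the identity
\begin{equation*}
S_{\lambda,2}(T) = \tfrac12 \sum_n \gamma_n (\xi_n^2 - 1) = \tfrac12 \sum_n \gamma_n \xi_n^2 - \tfrac12 \tr(K_{\lambda,T}),
\end{equation*}
which follows from the definition of the double Wiener integral in the paper together with It\^o's formula applied to the stochastic integrals $\int_0^t (e_n(s))'\, dW_s$. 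This representation is the key point: it converts the infinite-dimensional Gaussian quadratic form into an uncoupled sum of $g_n\xi_n + \tfrac12\gamma_n\xi_n^2$ terms.

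Given this diagonalization, independence of the $\{\xi_n\}$ and the elementary scalar identity
\begin{equation*}
\mathbb{E}\exp\left( a\xi + \tfrac{b}{2}\xi^2\right) = \frac{1}{\sqrt{1-b}}\exp\left(\frac{a^2}{2(1-b)}\right),\qquad b<1,\ \xi\sim N(0,1),
\end{equation*}
give, upon termwise multiplication for $\theta<1/\gamma_1$,
\begin{equation*}
\mathbb{E}^x \exp\bigl(\theta(S_{\lambda,1}^x(T)+S_{\lambda,2}(T))\bigr) = e^{-\frac{\theta}{2}\tr(K_{\lambda,T})} \prod_n \frac{1}{\sqrt{1-\theta\gamma_n}} \exp\left(\frac{\theta^2 g_n^2}{2(1-\theta\gamma_n)}\right).
\end{equation*}
Recognizing $\prod_n(1-\theta\gamma_n)^{-1/2}$ as the Fredholm determinant $\det(\mathrm{Id}-\theta K_{\lambda,T})^{-1/2}$, and $\sum_n g_n^2/(1-\theta\gamma_n) = \langle G^x_{\lambda,T},(\mathrm{Id}-\theta K_{\lambda,T})^{-1}G^x_{\lambda,T}\rangle$, yields \eqref{suanyi} and \eqref{e:DetRep}. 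The divergence claim \eqref{suanyi-1} for $\theta\ge 1/\gamma_1$ follows by noting that the single factor indexed by $n=1$ already diverges, since $\mathbb{E}\exp(a\xi_1^2/2)=\infty$ for $a\ge 1$; this can be made rigorous by truncating to the first chaos component in the $e_1$ direction and applying Fatou's lemma or by conditioning on the other coordinates.

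The main obstacle I expect is the rigorous justification of the interchange of expectation with the infinite product, and of the chaos expansion $S_{\lambda,2}(T)=\tfrac12\sum_n\gamma_n(\xi_n^2-1)$ in the $L^2$ sense; both rest on the trace-class property of $K_{\lambda,T}$ from Lemma \ref{l:KTEig} together with monotone/dominated convergence applied to the partial sums. Handling the vector-valued nature of the kernel carefully (so that the definition of the double stochastic integral given in the excerpt aligns correctly with the Mercer decomposition $e_n(s)\otimes e_n(t)$) will require the most attention, but the computation itself then reduces to the standard scalar formulas above.
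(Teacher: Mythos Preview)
Your proposal is correct and follows essentially the same route as the paper: diagonalize via the orthonormal basis $\{e_n\}$ from Lemma~\ref{l:KTEig}, rewrite $S_{\lambda,2}(T)$ as $\tfrac12\sum_n\gamma_n(\xi_n^2-1)$ using It\^o's formula, factor the expectation into a product of scalar Gaussian moment generating functions, and identify the pieces with the Fredholm determinant and the resolvent inner product. The paper handles the interchange of limit and expectation by exhibiting an explicit uniform bound on the partial products (rather than invoking monotone convergence directly), but this is exactly the kind of argument you anticipate in your final paragraph.
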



\begin{proof}
For notational simplicity,  we drop the subscript $x$ in $\E^{x}$. Denote 
$$W(e_{n})=\int_{0}^{T} (e_{n}(s))' \dif W_{s},$$
where $\{e_n\}$ is as in Lemma \ref{l:KTEig}.
It is easy to verify that 
$$\E[W(e_{n})W(e_{m})]=\int_{0}^{T} (e_{n}(s))' e_{n}(s) \dif s=\delta_{mn}, \mbox{ for } m,n \in \mathbb{N}$$
and hence $W(e_{1}), W(e_{2}),...$ is a sequence of i.i.d. standard normal  random variables.

From \eqref{e:KTEig}, we have
\begin{equation*}
\begin{split}
 \int_{[0,T]^2}\dif W_{u_1}^{'} H_{\lambda,T}(u_1,u_2)\dif W_{u_2}&=
  \sum_{n=1}^\infty \gamma_{n} \int_{[0,T]^2}\dif W_{u_1}^{'} e_n(u_1) \otimes e_n(u_2) \dif W_{u_2}, \\
\end{split}
\end{equation*}
where the series converges in $L^2(\PP^x)$. Also by definition of the multiple Wiener-It\^{o} integrals,
\Bes
\begin{split}
\int_{[0,T]^2}\dif W_{u_1}^{'} e_n(u_1) \otimes e_n(u_2) \dif W_{u_2}
&=2 \int_0^T (e_n(u_1))'\left(\int_0^{u_1} (e_n(u_2))' \dif W_{u_2} \right) \dif W_{u_1}.
\end{split}
\Ees
Applying It\^{o} formula to $(\int_0^t (e_n(s))' \dif W_s)^2$, we get
\begin{equation*}
\begin{split}
\left(\int_0^T (e_n(s))' \dif W_s\right)^2
&=2 \int_0^T (e_n(u_1))'\left(\int_0^{u_1} (e_n(u_2))' \dif W_{u_2} \right) \dif W_{u_1}+\int_0^T |e_n(s)|^{2} \dif s.
\end{split}
\end{equation*}
This, together with the fact $\int_0^T |e_n(s)|^{2} \dif s=1$, immediately implies
\begin{equation}  \label{e:DouInt}
\begin{split}
 \int_{[0,T]^2}\dif W_{u_1}^{'} H_{\lambda,T}(u_1,u_2)\dif W_{u_2}&=
  \sum_{n=1}^\infty \gamma_{n} \left[(W(e_{n}))^{2}-1\right].
\end{split}
\end{equation}
In rest of the proof we suppress $x$ in the notation $S_0^x$ and $G_{\lambda,T}^x$.
Since $G_{\lambda,T} \in L^2([0,T];\R^d)$, we can represent it as  $G_{\lambda,T}(t)=\sum_{n=1}^\infty g_{\lambda,n} e_n(t)$
with  the series converging in $L^2([0,T];\R^d)$ and $(g_{\lambda,n})_{n \ge 1}$ being a sequence of real numbers with $\sum_{n} g^{2}_{\lambda,n}<\infty$. Henceforth, we  suppress the subscript $\lambda$ in $g_{\lambda,n}$. Then
$$
\int_0^T (G_{\lambda,T}(t))' \dif W_t=\sum_{n=1}^\infty g_{n} W(e_n).
$$
Hence, combining the above two identities, we have from Proposition \ref{p:WienerDec}
$$\int_{0}^{T} |Z_{\lambda,s}|^{2} \dif s\ =\ S_{0}^x(T)+\sum_{n=1}^\infty g_{n} W(e_n)+\frac 12\sum_{n=1}^\infty \gamma_n \left[(W(e_{n}))^{2}-1\right].$$

Let $N \in \N$. Since $W(e_{1}), W(e_{2}),...$ are i.i.d.  standard normal random variables, if $\theta \gamma_{1}<1$ we have 
\begin{equation*}
\begin{split}
& \ \ \ \ \ \E \exp\left(\theta \sum_{n=1}^N g_{n} W(e_n)+\frac\theta 2 \sum_{n=1}^N \gamma_n \left\{[W(e_n)]^2-1\right\}\right)  \\
&=\exp\left(-\frac \theta 2 \sum_{n=1}^N \gamma_n\right) \prod_{n=1}^N \E\exp\left(\theta g_n W(e_n)+\frac{\theta\gamma_n} 2 [W(e_n)]^2\right) \\
&=\left[\prod_{n=1}^N \frac{1}{\sqrt{1-\theta \gamma_n}}\right] \exp\left(-\frac \theta 2 \sum_{n=1}^N \gamma_n+\frac{\theta^2} 2 \sum_{n=1}^N \frac{g^2_{n}}{1-\theta \gamma_n}\right).
\end{split}
\end{equation*}
Recall that $\sum_{n=1}^{\infty} \gamma_n<\infty$ and $\sum_{n=1}^{\infty} g^{2}_{n}<\infty$, whence 
$$\prod_{n=1}^\infty \frac{1}{\sqrt{1-\theta \gamma_n}}<\infty, \quad \quad \sum_{n=1}^N \frac{g^2_{n}}{1-\theta \gamma_n} \le \sum_{n=1}^N \frac{g^2_{n}}{1-\theta \gamma_1} \le \sum_{n=1}^\infty \frac{g^2_{n}}{1-\theta \gamma_1}<\infty,$$
this implies
 $$\lim_{N \rightarrow \infty}\left[\prod_{n=1}^N \frac{1}{\sqrt{1-\theta \gamma_n}}\right] \exp\left(-\frac \theta 2 \sum_{n=1}^N \gamma_n+\frac{\theta^2} 2 \sum_{n=1}^N \frac{g^2_{n}}{1-\theta \gamma_n}\right)\ \ {\rm exists}.$$
Thus we have shown that, for all $\theta < 1/\gamma_1$, 
\begin{equation}  \label{e:WC-1}
\begin{split}
&\ \ \ \ \lim_{N \rightarrow \infty} \E \exp\left(\theta \sum_{n=1}^N g_n W(e_n)+\frac\theta 2 \sum_{n=1}^N \gamma_n \left\{[W(e_n)]^2-1\right\}\right)  \\
&=\frac{1}{\sqrt{ {\rm det}({\rm Id}-\theta K_{\lambda,T})}} \exp\left(-\frac \theta 2  \tr(K_{\lambda,T})+\frac{\theta^2} 2   \Ll G_{\lambda,T}, ({\rm Id}-\theta K_{\lambda,T})^{-1}G_{\lambda,T}\Rr_{L^{2}([0,T]; \R^{d})}\right)
\end{split}
\end{equation}
where we have used the observation that
\begin{align}\label{key point 1}
\Ll G_{\lambda,T}, ({\rm Id}-\theta K_{\lambda,T})^{-1}G_{\lambda,T}\Rr_{L^{2}([0,T]; \R^{d})}=\sum_{n=1}^\infty \frac{g^2_{n}}{1-\theta \gamma_n}.
\end{align}
In order to complete the proof of the first statement in the theorem it suffices to show that in \eqref{e:WC-1} the order of
 the limit $\lim_{N \rightarrow \infty}$ and $\E$  can be interchanged. For this it is enough to show that for all
 $\theta < 1/\gamma_1$
\begin{equation}  \label{e:WC-2}
 \sup_{N \ge 1} \E \exp\left(\theta \sum_{n=1}^N g_n W(e_n)+\frac\theta 2 \sum_{n=1}^N \gamma_n \left\{[W(e_n)]^2-1\right\}\right)  < \infty
\end{equation}
This follows on observing that the above expectation is bounded above by
$$
 \frac{1}{\sqrt{ {\rm det}({\rm Id}-\theta K_{\lambda,T})}} \exp\left(\frac{|\theta|} 2  \tr(K_{\lambda,T})+\frac{\theta^2} 2 
\sum_{n=1}^\infty \frac{g^2_{n}}{1-\theta \gamma_n}\right),$$
which is clearly finite for every $\theta<\frac{1}{\gamma_{1}}$. This proves the first statement in the theorem.
The second statement is an immediate consequence of the fact that for $\theta \ge 1/\gamma_1$,
$\E \exp\{\theta \gamma_1 (W(e_1))^2\} = \infty$.
\end{proof}
\section{The eigenvalues of the operator $K_{\lambda,T}$}  \label{s:EigK}
  
 Recall from Lemma \ref{l:KTEig} that the operator $K_{\lambda,T}: L^{2}([0,T]; \R^{d}) \rightarrow L^{2}([0,T]; \R^{d})$ is symmetric and has  nonnegative eigenvalues $\gamma_{1} \ge \gamma_{2} \ge ... \ge 0$. This section is devoted to analyzing these eigenvalues. To this end, we extend the operator $K_{\lambda,T}$, in a natural fashion, from $L^{2}([0,T]; \R^{d})$ to $L^{2}([0,T]; \mathbb C^{d})$, namely the complex valued $L^{2}$ function space with inner product: 
 $$\Ll f,g\Rr_{L^{2}([0,T]; \mathbb C^{d})}=\int_{0}^{T} \sum_{i=1}^{d} f_{i}(t) \overline{g_{i}(t)} \dif t, \ \ \ \ \ \ \ f, g \in L^{2}([0,T]; \mathbb C^{d}).$$
 Note that
  $K_{\lambda,T}$ is also a symmetric operator on $L^2 ([0,T]; \mathbb{C}^{d})$ and that $\{e_n\}_{n \ge 1}$ is also an orthonormal basis in $L^{2}([0,T]; \mathbb C^{d})$. This extension will allow us to use tools from complex function theory. For notational simplicity, from now on we denote 
 $$\mcl H=L^2 ([0,T]; \mathbb{C}^{d}).$$
For a complex matrix $V$, $V^*$ will denote its conjugate transpose. Note that when the matrix $V$ is real $V^* = V'$. 
  
 Since $A$ is a normal matrix, there
 is a complex unitary matrix $U$ (cf. \cite[Thmeorem 2.5.8]{hj}) such that
 \begin{equation}\label{ubub}
 {U}^* {A} {U} =\mbox{diag}\set{a_1,\dots,a_d},
 \end{equation}
 where $a_i \in \mathbb{C}$.

Write $U=[U_{1},...,U_{d}]$ with $U_{k}$ being the column vectors of $U$. These vectors form an orthonormal basis of $\mathbb C^{d}$. Denote by $P_{U_k}$ the orthogonal projection from $\mathbb C^{d}$ to the subspace spanned by the vector $U_{k}$, which can be represented as $P_{U_k}=U_{k} \otimes U^{*}_{k}$ under the basis $U_{1},...,U_{d}$. From this it is easily seen that $P_{U_{i}}=P_{U_{i}}^{*}$ for each $i$, $P_{U_{i}} P_{U_{j}}=0$ for $i \ne j$, and $\mathrm{Id}=\sum^{d}_{k=1} P_{U_k}$. It follows from the above spectrum calculation that,
with $a_k = \alpha_k { +}\mi\beta_k$,
\begin{equation}\label{eq:spect}
\begin{aligned}
    & {{A}}=\sum^{d}_{k=1}  a_k P_{U_k}, \ \ \ \ \  \ \ \ \ \  A' = {{A}}^{*}=\sum^{d}_{k=1}  \bar{a}_k P_{U_k}, \\
    & {D}_{\lambda}= \sum^{d}_{k=1}  (a_k+\mi 2\lambda { {\beta}}_k  ) P_{U_k},\ \ \ \ \ \ D_{\lambda}^{*}= \sum^{d}_{k=1}  (\bar{a}_k-\mi 2\lambda   { {\beta}}_k ) P_{U_k},\\
    &{M}=  {{A}}+  {{A}}^{*}=\sum^{d}_{k=1}  2\alpha_k P_{U_k},\ \ \ \  \ \ {N}=   {{A}}-  {{A}}^{*}=\sum^{d}_{k=1}  2\mi\,\beta_k P_{U_k},
\end{aligned}
\end{equation}
and that
\begin{align}\label{e: S01}
\left|\left(\int_0^T e^{sM} \dif s\right)^{\frac 12}       N x\right|^{2}=\sum^{d}_{k=1} \frac{2\beta_k^2}{\alpha_k} (e^{\alpha_k T}-1)\abs{\innp{x, U_k}_{\C^d}}^2
\end{align}
and 
\begin{align}
 G^x_{\lambda,T} (u)&=2  N^{'} e^{-u {D^{'}_\lambda}} \left(\int_{u}^{T} e^{Ms} \dif s\right)       {N}x \nonumber \\
&=\sum^{d}_{k=1} \frac{4\beta_k^2}{\alpha_k}   e^{-(\bar{a}_k-\mi 2\lambda  { {\beta}}_k)u} \big(e^{2\alpha_k T}-e^{2\alpha_k u} \big) \innp{x,\,U_k}_{\mathbb C^{d}} U_k,\label{e:GxT}\\
H_{\lambda,T}(u_{1},u_{2})&=2 \left(e^{- u_1 {D_\lambda}} N\right)^{*} \left(\int_{u_1\vee u_2}^T\,e^{ {M}t}\dif t\right) e^{- u_2 {D_\lambda}} N  \nonumber \\
   &=\sum^{d}_{k=1} \frac{-4\beta_k^2}{\alpha_k} e^{-(\bar{a}_k-\mi 2\lambda  { {\beta}}_k)u_1-(a_k+\mi 2\lambda  { {\beta}}_k)u_2}(e^{2\alpha_k(u_1\vee u_2) }-e^{2\alpha_k T})\, P_{U_k},  \label{e:HDec}
\end{align}
{where for $x,y \in \mathbb C^{d}$, $\innp{x,\,y}_{\mathbb C^{d}} := \sum_{i=1}^d x_i \bar y_i$.}
Let $\mcl H_{k}=P_{U_{k}} \mcl H$ where $P_{U_{k}} \mcl H=\{P_{U_{k}} f: f \in \mcl H\}$. Clearly $\mcl H=\bigoplus_{i=1}^{d} \mcl H_{k}$, i.e., 
\Be  \label{e:DirSumF}
f=\oplus_{k=1}^{d} P_{U_{k}} f, \ \ \ \ \ \ f \in \mcl H.
\Ee
By \eqref{e:KOperator} and \eqref{e:HDec}, for any $f \in \mcl H$, we have 
\begin{equation}  \label{e:KTDecF}
\begin{split}
   K_{\lambda,T}f(u_{1})
   &= \sum^{d}_{k=1} \frac{-4\beta_k^2}{\alpha_k}\int_0^Te^{-(\bar{a}_k-\mi 2\lambda  { {\beta}}_k)u_1-(a_k+\mi 2\lambda  { {\beta}}_k)u_2}(e^{2\alpha_k(u_1\vee u_2) }-e^{2\alpha_k T}) P_{U_k}  f(u_2)  \dif u_2.
\end{split}
\end{equation}
In particular, $K_{\lambda,T}$ maps $\mcl H_{k}$ to itself. Let
\Be \label{d:KTk}
K^{(k)}_{\lambda,T}:=K_{\lambda,T} \big|_{\mcl H_{k}},
\Ee
be the restriction of the 
 operator $K_{\lambda,T}$ to $\mcl H_{k}$. For any $f \in \mcl H_{k}$, which is isomorphic to a function in $L^2([0,T];\mathbb C)$, we have
\ \ \ \
\begin{equation}  \label{e:KTKDec}
\begin{split}
   K^{(k)}_{\lambda,T}f(t)
   &=\frac{-4\beta_k^2}{\alpha_k}\int_0^Te^{-(\bar{a}_k-\mi 2\lambda  { {\beta}}_k)t-(a_k+\mi 2\lambda  { {\beta}}_k)s}(e^{2\alpha_k(t\vee s) }-e^{2\alpha_k T})  f(s)  \dif s, \ \ \ t \in [0,T].
\end{split}
\end{equation}
\vskip 3mm

Recall from \eqref{e:SigKT} that $\sigma(K_{\lambda,T})$ denotes the spectrum of $K_{\lambda,T}$. Denote by $\sigma(K_{\lambda,T}^{(k)})$ the spectrum of $K_{\lambda,T}^{(k)}$. It is easy to see that 
 \begin{equation}\label{spctra}
      \sigma(K_{\lambda,T} )=\bigcup_{k=1}^d \sigma\left(K_{\lambda,T}^{(k)}\right)=\set{\gamma_j^{(k)}:\,j=1,2,\dots,\, k=1,\dots,d },
 \end{equation}
 where some of $\gamma_j^{(k)}$ may take the same value. Indeed,  the eigenvalues and eigenfunctions of $K^{(k)}_{\lambda,T}$ are clearly those of $ K_{\lambda,T}$, which
 implies $\sigma(K_{\lambda,T} ) \supset \bigcup_{k=1}^d \sigma\left(K_{\lambda,T}^{(k)}\right)$.  On the other hand, if $\gamma$ and $f$ are  an eigenvalue and the corresponding eigenfunction of $K_{\lambda,T}$, respectively (that is $K_{\lambda,T} f=\gamma f$), then by \eqref{e:DirSumF}-\eqref{d:KTk}, 
 \begin{equation*}
\oplus_{k=1}^{d} K^{(k)}_{\lambda,T} P_{U_k}f=\oplus_{k=1}^{d} \gamma P_{U_{k}} f,
 \end{equation*}
 from which it follows that $\gamma$ must be an eigenvalue of some $K^{(k)}_{\lambda,T}$. This shows that  $\sigma(K_{\lambda,T} ) \subset \bigcup_{k=1}^d \sigma\left(K_{\lambda,T}^{(k)}\right)$. 

\vskip 3mm
The next proposition identifies a complex Sturm-Liouville problem governing the eigenfunctions of $K_{\lambda,T}^{(k)}$.
This result will enable us to obtain useful bounds on the eigenvalues of this operator. 
\begin{prop}
Let $1 \le k \le d$ be such that $\beta_k\neq 0$. Then ${\rm Ker}\left(K_{\lambda,T}^{(k)}\right)=\set{0}$. If $f \in \mcl H_{k}$ is nonzero and
$ K_{\lambda,T}^{(k)} f=\gamma f $, then
$f$ is a solution of the differential equation on $[0,T]$:
 \begin{equation}\label{kfequa}
    f'' -2\mi (1+2\lambda   )\beta_k  f' -\left[(1+2\lambda )^2\beta_k^2-\left(\frac{8\beta^{2}_k}{\gamma}-\alpha_k^2\right)\right] f=0,
 \end{equation}
subject to the separated boundary conditions
\begin{align}\label{kf}
\left\{
      \begin{array}{ll}
      [\alpha_k-\mi(1+2\lambda )\beta_k] f(0)+f'(0)=0, \\
     f(T)=0.
      \end{array}
\right.
\end{align}
\begin{proof} Let   {$\theta_{k}=a_k+2\mi \lambda  { {\beta}}_k $} and for $f \in \mcl H_{k}$ let $\tl K^{(k)}_{\lambda,T} f=-\frac{\alpha_{k}}{4 \beta^{2}_{k}}{K^{(k)}_{\lambda,T}}f.$
  It follows from \eqref{e:KTKDec} that
\begin{align}
  \tl K_{\lambda,T}^{(k)} f(t)&=e^{-\bar{\theta}_{k} t}(e^{2\alpha_k t}-e^{2\alpha_k T})\int_0^t e^{ -\theta_{k} s}f(s)\dif s\nonumber        \\
  &+ e^{ -\bar{\theta}_{k} t}\int_{t}^T e^{-\theta_{k} s}(e^{2\alpha_k s}-e^{2\alpha_k T})f(s)\dif s.\label{kf11}
\end{align}
Thus, $\tl K_{\lambda,T}^{(k)} f(t  ),\,t\in[0,T]$ is absolutely continuous. By differentiating both sides of (\ref{kf11}) with respect to $t$, we obtain
\begin{equation*}\label{kfpri}
 ( \tl K_{\lambda,T}^{(k)} f)'=-\bar{\theta}_{k} (\tl K_{\lambda,T}^{(k)}f)+  2\alpha_k e^{\theta_{k} t}\int_0^{t} e^{-\theta_{k}  s}f(s)\dif s,
\end{equation*} which implies that
\begin{equation}\label{mm}
 2\alpha_k e^{\theta_{k} t}\int_0^{t} e^{-\theta_{k}  s}f(s)\dif s=  ( \tl K_{\lambda,T}^{(k)} f)'+ \bar{\theta}_{k} (\tl K_{\lambda,T}^{(k)}f).
  \end{equation}
Similarly, $(\tl K_{\lambda,T}^{(k)} f)'(t),\,t\in[0,T]$ is absolutely continuous and we have that
\begin{align}
 ( \tl K^{(k)}_{\lambda,T} f(t))''&=-\bar{\theta}_{k} (\tl K_{\lambda,T}^{(k)}f(t))'+  2\alpha_k\theta_{k} e^{ \theta_{k} t}\int_0^{t} e^{-\theta_{k}  s}f(s)\dif s+2\alpha_k f(t). \label{kfpri2}
\end{align}
Substituting (\ref{mm}) into (\ref{kfpri2}), we have that
\begin{align}
  (\tl K^{(k)}_{\lambda,T} f)'' 
  &=(\theta_{k}-\bar{\theta}_{k})( \tl K_{\lambda,T}^{(k)} f)'+\abs{\theta_{k}}^2\tl K_{\lambda,T}^{(k)}f+2\alpha_k f(t).\label{bbb}
  \end{align}

\vskip 3mm 
Since $\alpha_k<0 $, if for $f \in \mcl H_k$, $ K^{(k)}_{\lambda,T}f=0 $ we have from (\ref{bbb}) immediately  that $f(t)\equiv 0$.
 This shows that ${\rm Ker}\left(K_{\lambda,T}^{(k)}\right)=\set{0}$.

If instead $f \in \mcl H_k$ is such that $K^{(k)}_{\lambda,T} f=\gamma f $ with $\gamma\neq 0$, then substituting $t=0$ in (\ref{mm}) and $t=T$ in (\ref{kf11}) gives the separated boundary condition (\ref{kf}). Furthermore (\ref{bbb}) implies that (\ref{kfequa}) holds.
\end{proof}
\end{prop}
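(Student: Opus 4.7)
The plan is to convert the integral eigenvalue equation $K^{(k)}_{\lambda,T} f = \gamma f$ into a second-order linear ODE with two boundary conditions by differentiating the kernel representation twice in $t$ and using the first-order identity to eliminate the residual integral term. To keep notation compact I would introduce $\theta_k := a_k + 2\mi\lambda\beta_k = \alpha_k + \mi(1+2\lambda)\beta_k$ and rescale $\tilde K := -\tfrac{\alpha_k}{4\beta_k^2} K^{(k)}_{\lambda,T}$, so that the formula \eqref{e:KTKDec} for $\tilde K f(t)$ splits the kernel at $s=t$ into two absolutely continuous integrals, one over $[0,t]$ carrying the factor $e^{2\alpha_k t}-e^{2\alpha_k T}$ and one over $[t,T]$ carrying the factor $e^{2\alpha_k s}-e^{2\alpha_k T}$.

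Differentiating once in $t$ produces boundary contributions at $s=t$ from the two pieces, which should cancel because both equal $e^{-\bar\theta_k t}(e^{2\alpha_k t}-e^{2\alpha_k T})e^{-\theta_k t}f(t)$ with opposite sign; the surviving contributions come from differentiating the exterior exponential $e^{-\bar\theta_k t}$ and from the variable factor inside the first integral. I expect the result to collapse to
\begin{equation*}
(\tilde K f)'(t) = -\bar\theta_k\,\tilde K f(t) + 2\alpha_k\, e^{\theta_k t}\int_0^t e^{-\theta_k s} f(s)\,\dif s.
\end{equation*}
A second differentiation, followed by substitution of the first-order identity to remove the residual integral $\int_0^t e^{-\theta_k s}f\,\dif s$, should give the closed second-order identity
\begin{equation*}
(\tilde K f)''(t) - (\theta_k - \bar\theta_k)(\tilde K f)'(t) - \abs{\theta_k}^2\,\tilde K f(t) = 2\alpha_k f(t).
\end{equation*}
Inserting $\tilde K f = -\tfrac{\alpha_k\gamma}{4\beta_k^2} f$ (valid when $\gamma \neq 0$) and simplifying with $\theta_k - \bar\theta_k = 2\mi(1+2\lambda)\beta_k$ and $\abs{\theta_k}^2 = \alpha_k^2 + (1+2\lambda)^2\beta_k^2$ should yield the stated ODE after routine algebra.

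The two boundary conditions are then extracted by evaluating at the endpoints. At $t = T$, the first integral in the representation of $\tilde K f(T)$ vanishes via the prefactor $e^{2\alpha_k T}-e^{2\alpha_k T} = 0$ and the second vanishes over its empty range, so $\gamma f(T)=0$ forces $f(T)=0$. At $t=0$ the first-order identity reduces to $(\tilde K f)'(0) = -\bar\theta_k\,\tilde K f(0)$, which, after dividing by the nonzero constant linking $\tilde K f$ to $f$, becomes $f'(0) + [\alpha_k - \mi(1+2\lambda)\beta_k]f(0) = 0$. For the kernel claim, $\gamma = 0$ forces $\tilde K f \equiv 0$, so the second-order identity degenerates to $2\alpha_k f(t) \equiv 0$; since $\alpha_k < 0$ by Assumption~\textbf{(A)}, this forces $f\equiv 0$.

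The main obstacle is purely bookkeeping: one has to track the boundary Leibniz contributions at $s=t$ in the first differentiation carefully enough to see them cancel, and keep the complex exponentials $e^{\pm\theta_k\cdot}$ and $e^{\pm\bar\theta_k\cdot}$ straight throughout. Once the first-order identity is in hand, passing to the second-order ODE and reading off the separated boundary conditions is essentially mechanical.
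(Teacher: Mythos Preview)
Your proposal is correct and follows essentially the same approach as the paper's proof: the same rescaling $\tilde K = -\tfrac{\alpha_k}{4\beta_k^2}K^{(k)}_{\lambda,T}$, the same split of the kernel at $s=t$, the same first- and second-order differentiation identities, and the same extraction of the boundary conditions from evaluating at $t=0$ and $t=T$. The paper's argument is slightly more terse in places (it does not spell out the Leibniz cancellation at $s=t$), but the logic and the computations are identical.
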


Now we solve the Sturm-Liouville problem \eqref{kfequa} and \eqref{kf} {for a fixed $k$ with $\beta_k\neq 0$}, and provide  estimates for the eigenvalues of $K_{\lambda,T}$.  
We consider three cases corresponding to different range of values of $\gamma$. 

Case I: $\gamma> \frac{8 \beta^{2}_{k}}{\alpha_k^2}$. Let  $v= \sqrt{\alpha_k^2 -\frac{8\beta^{2}_k}{\gamma}}$. Note that  {$v>0$}. The general solution of equation (\ref{kfequa}) in this case is
\begin{equation*}
 f(s)=e^{\mi (1+2\lambda    )\beta_k s} (c_1 e^{v s} +c_2 e^{-v s} ) ,
\end{equation*}
where $c_1,c_2$ are constants. The boundary condition (\ref{kf}) gives
\begin{equation*}\label{bdd0 11}
\left\{
      \begin{array}{ll}
        (\alpha_k +v) c_1 +(\alpha_k-v) c_2=0,  \\
     c_1 e^{v T}+    c_2 e^{-v T} =0.
      \end{array}
\right.
\end{equation*}  
It is easy to check that since $v>0$ and $\alpha_k<0$, we must have $c_1=c_2=0$ and thus $f=0$ is the only solution in this case.


Case II:  $\gamma= \frac{8 \beta^{2}_{k}}{\alpha_k^2}$. The general solution of equation (\ref{kfequa}) in this case is
\begin{equation*}\label{solutionkr 1}
 f(s)=e^{\mi (1+2\lambda  )\beta_k s}(c_1 +c_2 s ),
\end{equation*}
where $c_1,c_2$ are constants. The boundary condition (\ref{kf}) gives
\begin{equation*}\label{bdd0 1}
\left\{
      \begin{array}{ll}
        \alpha_k  c_1 + c_2=0,  \\
     c_1+  T c_2  =0.
      \end{array}
\right.
\end{equation*}
Since $\alpha_k T-1<0$, once more we must have $c_1=c_2=0$.

Case III:  $\gamma<\frac{8 \beta^{2}_{k}}{\alpha_k^2}$. Let $\omega=\sqrt{\frac{8\beta^{2}_k}{\gamma}-\alpha_k^2}$. Note that  $\omega>0$. The general solution of equation (\ref{kfequa}) is
\begin{equation}\label{solutionkr}
 f(s)=c_1 e^{\mi ((1+2\lambda   )\beta_k+\omega)s}+c_2 e^{\mi ((1+2\lambda   )\beta_k-\omega)s},
\end{equation}
where $c_1,c_2$ are constants. The boundary condition (\ref{kf}) gives
\begin{equation}\label{bdd0}
\left\{
      \begin{array}{ll}
        (\alpha_k +\mi  \omega )c_1 +(\alpha_k -\mi \omega) c_2=0,  \\
     e^{\mi \omega T}c_1+  e^{ -\mi \omega T} c_2  =0.
      \end{array}
\right.
\end{equation}
In order to have constants $c_1$ and $c_2$ such that $c_1^2+c_2^2\neq 0$,
we need
$$(\alpha_k+\mi\omega) e^{-\mi \omega T}=(\alpha_k-\mi\omega) e^{\mi \omega T},$$
which can be rewritten as 
\begin{equation}\label{ew}
\frac{\omega}{\alpha_k}=\tan wT.
\end{equation}
Observe that $\tan wT$ is a periodic function with  period $\frac{\pi}T$, whose restriction on $(-\frac{\pi}{2T},\frac{\pi}{2T})$ is a function crossing the origin and tending to $-\infty$ and $+\infty$ at $-\frac{\pi}{2T}$ and $\frac{\pi}{2T}$ respectively, and that $\frac{\omega}{\alpha_k}$ is a linear function crossing the origin and lying in the second and fourth quadrants. {Thus there is a unique intersection point between $\frac{\omega}{\alpha_k}$ and $\tan wT$ in each period $[\frac{(2j-1) \pi}{2T},\frac{(2j+1) \pi}{2T}]$ with $j \in \mathbb Z$.} 
  Since $\omega>0$, we only need to consider  positive solutions to \eqref{ew}. Denoting these solutions (in increasing order) as {$\omega^k_j(T)$ with $j=1,2,...$}, we have
\begin{equation}\label{bdx}
\frac{\pi}{2T}<\omega_1^k(T)<\frac{3\pi}{2T}<{\omega}_2^k(T)<\frac{5\pi}{2T}<\omega_3^k(T)<\frac{7\pi}{2T}< {\omega}_4^k(T)<\dots.
\end{equation}
Hence, the spectrum of  $K_{\lambda,T}^{(k)}$ is given as
\begin{equation}\label{e: Gamma jk}
\gamma_1^{(k)} \ge \gamma_2^{(k)} \ge \gamma_3^{(k)} \ge ... \ge 0 \ \ {\rm with }\ \ \gamma_j^{(k)}= \frac{8\beta^{2}_k}{\alpha_k^2+(\omega_j^k(T))^2}.
\end{equation}
From (\ref{spctra}), we now see that the {largest} eigenvalue $\gamma_1$ of $K_{\lambda,T}$ is given as
\begin{equation}\label{bound total}
     \gamma_1=\max_{k=1,...,d}\set{\frac{8\beta_k^2}{\alpha_k^2+(\omega_1^k(T))^2}},
\end{equation}
and that 
\begin{equation}  \label{e:SpeDec}
 \sigma(K_{\lambda,T})=  \set{\frac{8\beta_k^2}{\alpha_k^2+(\omega_j^k(T))^2}:\,k=1,\dots,d;\, j=  {1,2},\dots }.
\end{equation} 
For given $k$ and $j$, the solution of \eqref{bdd0} associated with $\omega = \omega_j^k(T)$ is given by 
 $c_1=\frac{e^{\mi \theta_{j}^{k}}}{2\mi},\,c_2=\bar{c}_1 $  with $$e^{\mi \theta_{j}^{k}}=\frac{-\alpha_k +\mi \omega_j^k(T)}{\sqrt{\alpha_k^2+(\omega_j^k(T))^2}}.$$ 
 Substituting these values in \eqref{solutionkr} we see that the eigenfunction (up to multiplying a constant) associated with the eigenvalue  $\gamma_j^{(k)}$ is given as
\begin{align}\label{e: Eigenfunc}
f_j^{k}(u)=e^{\mi (1+2\lambda)\beta_k u}\sin (\omega_j^k(T) u+\theta_j^{k})\cdot U_k \in L^{2}([0,T]; \mathbb C^{d}).
\end{align}

\section{Proof of the main result} \label{s:MResult}

\subsection{The calculation of $\tilde \Lambda$}  \label{s:ProMai}
In this section we characterize the domain of $\tilde \Lambda$ and give a formula for $\tilde \Lambda(\lambda)$ for $\lambda$ in the domain.

\begin{lem}\label{c.Lem 5.1}
Let $T>0, j\in \mathbb N, \alpha<0$. Assume that $\frac{(2j-1)\pi}{2T}<\omega<\frac{(2j+1)\pi}{2T}$ and that $\theta \in [0,\frac{\pi}{2})$ satisfies $e^{\mi \theta}=\frac{-\alpha+\mi \omega}{\sqrt{\alpha^2+\omega^2}}$. Then 
\begin{align*}
  \frac12 (1-\frac{1}{\pi})T \le \int_0^T \sin^2(\omega u +\theta) \dif u \le \frac12 \big[(1+\frac{1}{\pi})T -\frac{1}{\alpha}\big].
\end{align*}
\end{lem}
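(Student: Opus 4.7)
The plan is to use the half-angle identity $\sin^2 x = (1 - \cos 2x)/2$ and integrate explicitly, which reduces the problem to two-sided bounds on an oscillatory term. Specifically, I would write
\begin{equation*}
\int_0^T \sin^2(\omega u + \theta)\, \dif u = \frac{T}{2} - \frac{1}{4\omega}\bigl[\sin(2\omega T + 2\theta) - \sin 2\theta\bigr].
\end{equation*}
The natural temptation here is to collapse the bracketed expression via the product-to-sum identity into $2\cos(\omega T + 2\theta)\sin(\omega T)$, but this would force one to bound $|\sin \omega T|/(2\omega)$, and near $\omega T \approx \pi/2$ this is of size $T/\pi$ rather than $T/(2\pi)$. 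The key idea, and the one I expect to be the main obstacle to notice, is to keep the two sines separated and exploit the sign of $\sin 2\theta$ to account for the $-1/\alpha$ correction in the upper bound.

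Next I would use the assumption $e^{\mi \theta}=\frac{-\alpha+\mi\omega}{\sqrt{\alpha^2+\omega^2}}$ (so $\cos\theta=-\alpha/\sqrt{\alpha^2+\omega^2}$, $\sin\theta=\omega/\sqrt{\alpha^2+\omega^2}$) to compute
\begin{equation*}
\sin 2\theta = \frac{-2\alpha\omega}{\alpha^2+\omega^2} > 0,\qquad \frac{\sin 2\theta}{4\omega} = \frac{-\alpha}{2(\alpha^2+\omega^2)} \in \Bigl(0,\tfrac{1}{2|\alpha|}\Bigr].
\end{equation*}
For the remaining term I would apply the trivial bound $|\sin(2\omega T + 2\theta)|\le 1$ together with the assumption $\omega > (2j-1)\pi/(2T) \ge \pi/(2T)$ to obtain
\begin{equation*}
\left|\frac{\sin(2\omega T + 2\theta)}{4\omega}\right| \le \frac{1}{4\omega} < \frac{T}{2\pi}.
\end{equation*}

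The two bounds are then immediate. For the lower bound, positivity of $\sin 2\theta/(4\omega)$ gives
\begin{equation*}
-\frac{1}{4\omega}\bigl[\sin(2\omega T + 2\theta)-\sin 2\theta\bigr]\ \ge\ -\frac{\sin(2\omega T + 2\theta)}{4\omega}\ \ge\ -\frac{T}{2\pi},
\end{equation*}
yielding $\int_0^T\sin^2(\omega u+\theta)\,\dif u \ge \tfrac{1}{2}(1-1/\pi)T$. For the upper bound, adding the two estimates yields
\begin{equation*}
-\frac{1}{4\omega}\bigl[\sin(2\omega T + 2\theta)-\sin 2\theta\bigr]\ \le\ \frac{T}{2\pi} + \frac{1}{2|\alpha|}\ =\ \frac{T}{2\pi} - \frac{1}{2\alpha},
\end{equation*}
giving $\int_0^T\sin^2(\omega u+\theta)\,\dif u \le \tfrac{1}{2}[(1+1/\pi)T - 1/\alpha]$. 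No further input (such as the quantization condition $\omega T + \theta \in \pi\mathbb{Z}$ coming from \eqref{ew}) is needed; the bounds hold for every $(\omega,\theta)$ satisfying the stated hypotheses.
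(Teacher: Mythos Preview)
Your proof is correct and follows essentially the same approach as the paper: both explicitly integrate via the half-angle identity to obtain $\tfrac{T}{2}-\tfrac{1}{4\omega}[\sin(2\omega T+2\theta)-\sin 2\theta]$, then bound the two sines separately using $\sin 2\theta/(4\omega)=-\alpha/[2(\alpha^2+\omega^2)]\in(0,\tfrac{1}{2|\alpha|}]$ from the hypothesis on $\theta$, and $1/(4\omega)<T/(2\pi)$ from $\omega>(2j-1)\pi/(2T)\ge \pi/(2T)$. Your observation that the quantization condition \eqref{ew} is not needed is also in agreement with the paper's argument.
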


\begin{proof}
Since $e^{\mi \theta}=\frac{-\alpha+\mi \omega}{\sqrt{\alpha^2+\omega^2}}$ and  $\frac{(2j-1)\pi}{2T}<\omega<\frac{(2j+1)\pi}{2T}$, we have 
\begin{align*}
\frac{\sin 2\theta}{2\omega}=\frac{\cos\theta \sin\theta}{\omega}=\frac{-\alpha}{\alpha^2+\omega^2 }\in (0,-\frac{1}{\alpha}]
\end{align*}
and 
\begin{align*}
\abs{\frac{\sin 2(\omega T+\theta)}{2\omega}}\le \frac{1}{2\omega}\le\frac {T} {(2j-1)\pi}\le \frac {T} {\pi}.
\end{align*} 
A straightforward calculation gives 
\begin{align}\label{int sin}
\int_0^T \sin^2(\omega u +\theta) \dif u&=\frac12 ( T- \frac{\sin 2(\omega T+\theta)- \sin 2\theta}{2\omega}),
\end{align} 
which, together with the previous two relations, immediately yields the inequality in the lemma.
\end{proof}

\begin{lem}\label{c.Lem 5.2}
{Assume the same conditions as in Lemma~\ref{c.Lem 5.1}.
Then
\begin{align*}
\abs{\int_0^T e^{-\alpha u} (e^{2\alpha u} - e^{2\alpha T})\sin(\omega u + \theta)
\dif u}\le   \frac{-2\alpha e^{\alpha T}}{ {\alpha^2+\omega^2}}+ \frac{-2\omega \alpha}{(\alpha^2+\omega^2)^{\frac32}}.
\end{align*}}
%
%
%
%
%
\end{lem}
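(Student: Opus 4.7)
The plan is to evaluate the integral explicitly, exploit a key cancellation coming from the definition of $\theta$, and then apply the triangle inequality together with $|\sin|\le 1$. First I would split
\[
e^{-\alpha u}(e^{2\alpha u}-e^{2\alpha T}) = e^{\alpha u} - e^{2\alpha T}e^{-\alpha u},
\]
so that the integral of interest becomes
\[
J(T) \;=\; \int_0^T e^{\alpha u}\sin(\omega u+\theta)\,du \;-\; e^{2\alpha T}\!\int_0^T e^{-\alpha u}\sin(\omega u+\theta)\,du.
\]
Using the elementary antiderivative $\int e^{au}\sin(bu+c)\,du = \frac{e^{au}(a\sin(bu+c)-b\cos(bu+c))}{a^2+b^2}$ for each piece and combining gives, after a short calculation,
\[
J(T) \;=\; \frac{1}{\alpha^2+\omega^2}\Bigl[\,2\alpha e^{\alpha T}\sin(\omega T+\theta)\;-\;(\alpha\sin\theta-\omega\cos\theta)\;-\;e^{2\alpha T}(\alpha\sin\theta+\omega\cos\theta)\Bigr].
\]

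Next I would use the explicit form of $\theta$: since $e^{\mi\theta}=(-\alpha+\mi\omega)/\sqrt{\alpha^2+\omega^2}$, we have $\cos\theta=-\alpha/\sqrt{\alpha^2+\omega^2}$ and $\sin\theta=\omega/\sqrt{\alpha^2+\omega^2}$, which yields the crucial identities
\[
\alpha\sin\theta+\omega\cos\theta = 0,\qquad \alpha\sin\theta-\omega\cos\theta = \frac{2\alpha\omega}{\sqrt{\alpha^2+\omega^2}}.
\]
Substituting into the expression above makes the $e^{2\alpha T}$ term vanish and collapses the formula to
\[
J(T) \;=\; \frac{2\alpha e^{\alpha T}\sin(\omega T+\theta)}{\alpha^2+\omega^2} \;-\; \frac{2\alpha\omega}{(\alpha^2+\omega^2)^{3/2}}.
\]

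Finally, applying the triangle inequality and $|\sin(\omega T+\theta)|\le 1$, together with $\alpha<0$, $\omega>0$ (so $-2\alpha>0$ and $-2\alpha\omega>0$), I obtain
\[
|J(T)| \;\le\; \frac{-2\alpha e^{\alpha T}}{\alpha^2+\omega^2} \;+\; \frac{-2\alpha\omega}{(\alpha^2+\omega^2)^{3/2}},
\]
which is the desired bound. The argument is entirely elementary and there is no real obstacle; the only nontrivial point is recognizing the cancellation $\alpha\sin\theta+\omega\cos\theta=0$ that is forced by the boundary-condition-derived form of $\theta$. Note that the hypotheses on $\omega$ lying in $(\frac{(2j-1)\pi}{2T},\frac{(2j+1)\pi}{2T})$ are not actually needed for this particular estimate (they enter only through the definition of $\theta$ via the Sturm--Liouville problem), but are retained to keep the statement compatible with its use in the sequel.
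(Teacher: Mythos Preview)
Your proof is correct and follows essentially the same route as the paper. Both arguments split the integrand, evaluate the two elementary integrals explicitly, use the identity $\cos\theta=-\alpha/\sqrt{\alpha^2+\omega^2}$, $\sin\theta=\omega/\sqrt{\alpha^2+\omega^2}$ to obtain the closed form
\[
\frac{2\alpha e^{\alpha T}\sin(\omega T+\theta)}{\alpha^2+\omega^2}-\frac{2\alpha\omega}{(\alpha^2+\omega^2)^{3/2}},
\]
and then bound by the triangle inequality and $|\sin|\le 1$; the only cosmetic difference is that the paper packages the intermediate formulas using $\sin\omega T$, $\sin(\omega T+2\theta)$ and $\sin 2\theta$, whereas you isolate the cancellation $\alpha\sin\theta+\omega\cos\theta=0$ directly. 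Your remark that the interval condition on $\omega$ is not used in this particular estimate is also accurate.
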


\begin{proof}
By the following equalities   
\begin{align*}
\int_0^T e^{-\alpha u}  \sin (\omega u+\theta)\dif u=\frac{e^{-\alpha T} \sin \omega T}{\sqrt{\alpha^2+\omega^2}}, \quad \int_0^T e^{\alpha u}  \sin (\omega u+\theta)\dif u=\frac{-e^{\alpha T} \sin (\omega T+2\theta) +\sin 2\theta}{\sqrt{\alpha^2+\omega^2}},
\end{align*} 
we have 
\begin{align*}
\int_0^T e^{-\alpha u} \big(e^{2\alpha u}  -e^{2\alpha T}\big) \sin (\omega u+\theta)\dif u 
&=\frac{ - 2\cos \theta \sin (\omega T+\theta)e^{\alpha T} +\sin 2\theta}{\sqrt{\alpha^2+\omega^2}}.
\end{align*}
On the other hand, the assumption $e^{\mi \theta} = \frac{-\alpha+\mi \omega}{\sqrt{\alpha^2+\omega^2}}$ implies $\cos \theta=-\frac{\alpha}{\sqrt{\alpha^{2}+\omega^{2}}}$  and $\sin \theta=\frac{\omega}{\sqrt{\alpha^{2}+\omega^{2}}}$. This and the previous relation 
 immediately yield the inequality in the lemma.  
\end{proof}

\begin{lem} \label{c.Lem 5.3}
Let $\theta\le \min\limits_{k=1,...,d}\set{\frac{\alpha_k^2 }{8\beta_k^2}}$. For sufficiently large $T$ we have
\begin{align*}
 \Ll G_{\lambda,T}^x, ({\rm Id}-\theta K_{\lambda,T})^{-1}G_{\lambda,T}^x\Rr_{L^{2}([0,T]; \R^{d})}\le 
 \frac{1}{\theta^2 ( 1-\frac{1}{\pi})}\sum_{k=1}^{d} |\alpha_{k}| {\abs{\innp{x,\,U_k}_{\mathbb C^{d}}}^2}  .
 \end{align*}
\end{lem}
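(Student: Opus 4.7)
The plan is to diagonalize $(\mathrm{Id}-\theta K_{\lambda,T})^{-1}$ in the eigenbasis of $K_{\lambda,T}$ constructed in Section~\ref{s:EigK} and then estimate each Fourier coefficient individually. Since the subspaces $\mcl H_k = P_{U_k}\mcl H$ are mutually orthogonal and $K_{\lambda,T}$-invariant, and within each $\mcl H_k$ (for $\beta_k\neq 0$) the functions $\{f_j^k\}_{j\ge1}$ of \eqref{e: Eigenfunc} are orthogonal eigenvectors with distinct eigenvalues $\gamma_j^{(k)}$ given by \eqref{e: Gamma jk}, the identity \eqref{key point 1} extends to
\begin{align*}
\Ll G_{\lambda,T}^x,(\mathrm{Id}-\theta K_{\lambda,T})^{-1}G_{\lambda,T}^x\Rr_{L^2([0,T];\R^d)}=\sum_{k:\beta_k\neq 0}\sum_{j\ge1}\frac{\bigl|\Ll G_{\lambda,T}^x,f_j^k\Rr\bigr|^2}{\|f_j^k\|^2\bigl(1-\theta\gamma_j^{(k)}\bigr)}.
\end{align*}
Components with $\beta_k=0$ contribute nothing, since $G_{\lambda,T}^x$ and $K_{\lambda,T}$ both vanish on the corresponding $\mcl H_k$.

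To bound one summand I would compute the Fourier coefficient explicitly. Substituting \eqref{e:GxT} and \eqref{e: Eigenfunc} into the inner product and using $\bar a_k-2\mi\lambda\beta_k=\alpha_k-\mi(1+2\lambda)\beta_k$ so that the oscillatory factors $e^{\pm\mi(1+2\lambda)\beta_k u}$ cancel, one obtains
\begin{align*}
\Ll G_{\lambda,T}^x,f_j^k\Rr=\frac{4\beta_k^2}{\alpha_k}\,\innp{x,U_k}_{\C^d}\int_0^T e^{-\alpha_k u}\bigl(e^{2\alpha_k T}-e^{2\alpha_k u}\bigr)\sin(\omega_j^k u+\theta_j^k)\,\dif u.
\end{align*}
Lemma~\ref{c.Lem 5.2} (applied with $\alpha=\alpha_k,\,\omega=\omega_j^k,\,\theta=\theta_j^k$) bounds the integral by $\tfrac{-2\alpha_k e^{\alpha_k T}}{\alpha_k^2+(\omega_j^k)^2}+\tfrac{-2\alpha_k\omega_j^k}{(\alpha_k^2+(\omega_j^k)^2)^{3/2}}$. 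Lemma~\ref{c.Lem 5.1} supplies $\|f_j^k\|^2\ge\tfrac12(1-1/\pi)T$, and the hypothesis $\theta\le\alpha_k^2/(8\beta_k^2)$ together with \eqref{e: Gamma jk} gives $1-\theta\gamma_j^{(k)}\ge(\omega_j^k)^2/(\alpha_k^2+(\omega_j^k)^2)$.

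Combining these three estimates (and noting $e^{\alpha_kT}\to0$ exponentially), each summand is dominated by $C\beta_k^4|\innp{x,U_k}_{\C^d}|^2/[T(\alpha_k^2+(\omega_j^k)^2)^2]$ plus a term that vanishes as $T\to\infty$. Summing over $j$ requires the asymptotic Riemann-sum estimate
\begin{align*}
\sum_{j\ge1}\frac{1}{(\alpha_k^2+(\omega_j^k)^2)^2}\;\le\;(1+o_T(1))\,\frac{T}{4|\alpha_k|^3},
\end{align*}
which I would derive by combining the interlacing $\omega_j^k>(2j-1)\pi/(2T)$ from \eqref{bdx} with monotonicity of $y\mapsto(\alpha_k^2+y^2)^{-2}$ and the identity $\int_0^\infty\dif y/(\alpha_k^2+y^2)^2=\pi/(4|\alpha_k|^3)$. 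The factors of $T$ cancel, leaving a per-$k$ bound of order $\beta_k^4|\innp{x,U_k}_{\C^d}|^2/[(1-1/\pi)|\alpha_k|^3]$, which the hypothesis in the equivalent form $\beta_k^4\le\alpha_k^4/(64\theta^2)$ converts into the summand $|\alpha_k|\,|\innp{x,U_k}_{\C^d}|^2/[\theta^2(1-1/\pi)]$. Summing over $k$ yields the claimed inequality.

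The main obstacle I anticipate is the $j$-summation: because the $\omega_j^k$ are solutions of the transcendental equation \eqref{ew} and not available in closed form, the Riemann-sum comparison must proceed purely from the one-sided bound in \eqref{bdx}, and the constants have to be tracked carefully so that the $o_T(1)$ correction and the exponentially small piece from Lemma~\ref{c.Lem 5.2} can be genuinely absorbed for $T$ large into the explicit coefficient $1/[\theta^2(1-1/\pi)]$ claimed on the right-hand side.
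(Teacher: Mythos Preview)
Your proposal is correct and follows essentially the same route as the paper: expand in the eigenbasis via \eqref{key point 1}, compute $\langle G_{\lambda,T}^x,f_j^k\rangle$ explicitly from \eqref{e:GxT}--\eqref{e: Eigenfunc}, bound it by Lemma~\ref{c.Lem 5.2}, bound $\|f_j^k\|^2$ below by Lemma~\ref{c.Lem 5.1}, use $1-\theta\gamma_j^{(k)}\ge(\omega_j^k)^2/(\alpha_k^2+(\omega_j^k)^2)$, and finish with the Riemann-sum asymptotics coming from \eqref{bdx}. The only cosmetic difference is that the paper invokes $\theta\le\alpha_k^2/(8\beta_k^2)$ immediately to replace $4\beta_k^2/|\alpha_k|$ by $|\alpha_k|/(2\theta)$ in the inner-product bound, whereas you carry $\beta_k^4$ through and use the equivalent inequality $\beta_k^4\le\alpha_k^4/(64\theta^2)$ at the end; the resulting constants and the factor-of-two slack that absorbs the $o_T(1)$ remainder are identical.
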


\begin{proof}
By (\ref{key point 1}), 
\begin{align*}
 \Ll G_{\lambda,T}^x, ({\rm Id}-\theta K_{\lambda,T})^{-1}G_{\lambda,T}^x\Rr_{L^{2}([0,T]; \R^{d})}&=\sum_{k=1}^{d}\sum_{j=1}^\infty \frac{\abs{\innp{G_{\lambda,T}^x,\, f_j^{k}}_{L^{2}([0,T]; \C^{d})}}^2 }{(1-\theta \gamma_j^{k})\norm{ f_j^{k}}^2_{L^{2}([0,T]; \C^{d})}},
\end{align*} 
{where $\{f_j^{k}, j \in \mathbb{N}, k= 1, \ldots d\}$ is a complete orthonormal system of eigenvectors associated with eigenvalues $\{\gamma_j^{(k)}\}$. Note that when $\beta_k \neq 0$, $f_j^{k}$ is given by \eqref{e: Eigenfunc}.}
It follows from  (\ref{e:GxT}) that, {when $\beta_k \neq 0$},  
\begin{align*}
&\abs{\innp{G_{\lambda,T}^x,\, f_j^{k}}_{L^{2}([0,T]; \C^{d} }}\\
&=   \abs{\frac{4\beta_k^2}{\alpha_k} \innp{x,\,U_k}_{\mathbb C^{d}}  \int_0^T  e^{-\alpha_k u} \big(e^{2\alpha_k T}-e^{2\alpha_k u} \big) \sin (\omega_j^k(T) u+\theta_j^{k})\dif u}\\
&\le \frac{1}{2\theta} \abs{ {\alpha_k} \innp{x,\,U_k}_{\mathbb C^{d}} \cdot \Big[ \frac{-2\alpha_k e^{\alpha_k T}}{ {\alpha_k^2+(\omega_j^k(T))^2}}+ \frac{-2\omega_j^k(T) \alpha_k}{(\alpha_k^2+(\omega_j^k(T))^2)^{\frac32}}\Big]  },
\end{align*}
where the last inequality is by Lemma~\ref{c.Lem 5.2}. {Note that the above inequality is trivially true when
$\beta_k=0$.}
By (\ref{e: Gamma jk}), 
\begin{align*}
\frac{1}{1-\theta \gamma_j^{k}}\le \frac{1}{1- \frac{\alpha_k^2 }{8\beta_k^2} \gamma_j^{k}}=\frac {\alpha_k^2+(\omega_j^k(T))^2}{(\omega_j^k(T))^2}.
\end{align*}
The above two  bounds and Lemma~\ref{c.Lem 5.1} imply that 
\begin{align*}
& \Ll G_{\lambda,T}^x, ({\rm Id}-\theta K_{\lambda,T})^{-1}G_{\lambda,T}^x\Rr_{L^{2}([0,T]; \R^{d})}\\
& \le \frac{2}{\theta^2 ( 1-\frac{1}{\pi})}\sum_{k=1}^{d}\abs{ {\alpha_k} \innp{x,\,U_k}_{\mathbb C^{d}}}^2 \sum_{j=1}^\infty   \frac{1}{T} \Big[ \frac{ -\alpha_k e^{\alpha_k T}}{{\omega_j^k(T)} \sqrt{\alpha_k^2+(\omega_j^k(T))^2}}+ \frac{- \alpha_k}{(\alpha_k^2+(\omega_j^k(T))^2) }\Big]^2\\
&\le \frac{2}{\theta^2 ( 1-\frac{1}{\pi})}\sum_{k=1}^{d}\abs{ {\alpha_k} \innp{x,\,U_k}_{\mathbb C^{d}}}^2 \sum_{j=1}^\infty   \frac{1}{T} \Big[ \frac{ -2T\alpha_k e^{\alpha_k T}}{{\pi} \sqrt{\alpha_k^2+(\omega_j^k(T))^2}}+ \frac{- \alpha_k}{(\alpha_k^2+(\omega_j^k(T))^2) }\Big]^2
\end{align*}
where the last inequality is from  \eqref{bdx}. As $T\to \infty$, the inner sum in the last line has the following limit:
\begin{align*}
\frac{1}{\pi}\int_{0}^{\infty}\frac{\alpha_k^2}{(\alpha_k^2+t^2)^2 } \dif t =\frac{1}{4\abs{\alpha_k}}.
\end{align*}
Indeed, thanks to \eqref{bdx} and recalling $\alpha_{k}<0$,  we have 
 \begin{align*}
 \lim_{T\to \infty} 2T e^{\alpha_k T}&=0,\;\;
 \lim_{T\to \infty} \sum_{j=1}^\infty   \frac{1}{T}  \frac{ \alpha_k}{  {\alpha_k^2+(\omega_j^k(T))^2}}=\frac{\pi}{2}, \\
 \lim_{T\to \infty} \sum_{j=1}^\infty   \frac{1}{T}  \frac{(\alpha_k)^2}{ \sqrt{\Big(\alpha_k^2+(\omega_j^k(T))^2\Big)^3}}&=1, \;\;
 \lim_{T\to \infty} \sum_{j=1}^\infty   \frac{1}{T}  \frac{(\alpha_k)^2}{\Big(\alpha_k^2+(\omega_j^k(T))^2\Big)^2}=\frac{1}{4\abs{\alpha_k}}.
 \end{align*}
 Hence, when $T$ is large enough, we have 
\begin{align*}
\sum_{j=1}^\infty   \frac{1}{T} \Big[ \frac{-2T\alpha_k  e^{\alpha_k T}}{{\pi} \sqrt{\alpha_k^2+(\omega_j^k(T))^2}}+ \frac{- \alpha_k}{(\alpha_k^2+(\omega_j^k(T))^2) }\Big]^2\le \frac{1}{2\abs{\alpha_k}},
\end{align*}which yields the desired upper bound.
\end{proof}

Recall the interval $\mcl D$ introduced in Theorem \ref{t:FT}.
\begin{lem}  \label{l:CalTlLam}
	For $\lambda \in \mcl D$
\begin{equation}\label{Lambda closed form}
\tilde \Lambda(\lambda)= \lim_{t\to \infty} \tilde \Lambda_t(\lambda)= -\frac 12\sum_{k=1}^{d}   \left(\sqrt{ \alpha_k^2- 4\lambda(1+\lambda) \beta_k^2} +\alpha_k\right) .
\end{equation}
Moreover, when $\lambda \in \mcl D^c$,
we have 
$$\lim_{T\to \infty}\tilde \Lambda_T(\lambda)=\infty.$$
\end{lem}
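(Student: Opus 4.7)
The plan is to combine Theorem \ref{t:ExpIntZ} (with $\theta=\lambda(1+\lambda)/2$) with the detailed spectral information obtained in Section \ref{s:EigK} to evaluate the limit explicitly. For $\lambda\in\mcl D$ we have $\theta \le \min_k \alpha_k^2/(8\beta_k^2)$, so $\theta<1/\gamma_1^{(k)}(T)$ for every $T>0$ (since each $\gamma_j^{(k)}(T)=8\beta_k^2/(\alpha_k^2+(\omega_j^k(T))^2)<8\beta_k^2/\alpha_k^2$ by $\omega_j^k(T)>0$). Thus the representation \eqref{suanyi} applies; integrating it against the Gaussian invariant measure $\mu$ with density \eqref{eq:density} gives a finite Gaussian integral in $x$ whose exponent is a quadratic form with coefficients that are $T$-bounded: the quadratic part of $S_0^x(T)$ converges as $T\to\infty$ since $\int_0^T e^{sM}\dif s\to -M^{-1}$, and by Lemma \ref{c.Lem 5.3} the inner product term is also uniformly bounded in $x$ and $T$. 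Consequently, after performing the Gaussian integration over $\mu$, the $x$-dependent contribution to $\log \E^\mu[\cdot]$ is $O(1)$, hence $o(T)$.

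Next I would isolate and cancel the two linear-in-$T$ deterministic terms. From \eqref{eq:soxt} the $x$-independent part of $\theta S_0^x(T)$ equals $\theta\int_0^T\tr[N'e^{uM}N](T-u)\dif u = -\theta T\tr(N'M^{-1}N)+O(1)$, while by \eqref{e:TrKTMer}, $-\frac{\theta}{2}\tr(K_{\lambda,T}) = \theta T\tr(N'M^{-1}N)+O(1)$. These cancel exactly to leading order, leaving
\[
\tilde\Lambda_T(\lambda) \;=\; -\frac{1}{2T}\log\det({\rm Id}-\theta K_{\lambda,T}) + o(1) \;=\; -\frac{1}{2T}\sum_{k=1}^d\sum_{j=1}^\infty \log\!\left(1-\theta\gamma_j^{(k)}(T)\right) + o(1).
\]

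The main computation is then to evaluate the spectral sum. Substituting $\gamma_j^{(k)}(T)=8\beta_k^2/(\alpha_k^2+(\omega_j^k(T))^2)$ and setting $b_k^2:=\alpha_k^2-4\lambda(1+\lambda)\beta_k^2\ge 0$ (nonnegativity is precisely the condition $\lambda\in\mcl D$) gives
\[
-\frac{1}{2T}\sum_{k,j}\log\!\left(1-\theta\gamma_j^{(k)}(T)\right) \;=\; \frac{1}{2T}\sum_{k=1}^d\sum_{j=1}^\infty \log\!\left(\frac{\alpha_k^2+(\omega_j^k(T))^2}{b_k^2+(\omega_j^k(T))^2}\right).
\]
The bracketing \eqref{bdx} shows $\omega_j^k(T)=(j+\eta_j^k)\pi/T$ with $\eta_j^k\in(-\tfrac12,\tfrac12)$, so the inner sum is a Riemann sum of the integrable function $u\mapsto\log\frac{\alpha_k^2+u^2}{b_k^2+u^2}$ over $[0,\infty)$ with mesh $\pi/T$. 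The standard identity $\int_0^\infty \log\frac{a^2+u^2}{b^2+u^2}\dif u = \pi(a-b)$ for $a,b\ge 0$ yields the limit $\tfrac{1}{2\pi}\cdot\pi(|\alpha_k|-|b_k|) = -\tfrac12(\alpha_k+\sqrt{\alpha_k^2-4\lambda(1+\lambda)\beta_k^2})$ per $k$, which summed over $k$ is exactly \eqref{Lambda closed form}. The hardest step is justifying this Riemann sum convergence with the non-uniform spacings $\omega_j^k(T)$; I would handle it by monotone sandwich bounds using $\omega_j^k(T)\in((2j-1)\pi/(2T),(2j+1)\pi/(2T))$, together with a tail estimate showing the sum over $j\ge J_T$ (for suitable $J_T\to\infty$) is $o(T)$ since the integrand decays like $u^{-2}$.

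Finally, for $\lambda\in\mcl D^c$ we have $\theta>\alpha_k^2/(8\beta_k^2)$ for some $k$. Since for each fixed $j$, $\omega_j^k(T)\to 0$ as $T\to\infty$ (as shown by analyzing \eqref{ew}: with $\omega/\alpha_k$ bounded, the equation forces $\omega T\to j\pi$), we get $\gamma_j^{(k)}(T)\uparrow 8\beta_k^2/\alpha_k^2$. In particular $\gamma_1^{(k)}(T)>1/\theta$ for all $T$ sufficiently large, so Theorem \ref{t:ExpIntZ} gives $\E^x\exp(\theta\int_0^T|Z_{\lambda,s}|^2\dif s)=\infty$ for every $x$, hence after integrating in $\mu$, $\tilde\Lambda_T(\lambda)=\infty$ for all large $T$, proving $\lim_{T\to\infty}\tilde\Lambda_T(\lambda)=\infty$.
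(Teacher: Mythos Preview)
Your proposal is correct and follows essentially the same approach as the paper: both integrate the representation of Theorem~\ref{t:ExpIntZ} against $\mu$, use Lemma~\ref{c.Lem 5.3} together with the trace formula~\eqref{e:TrKTMer} to show that all terms except $-\frac{1}{2T}\log\det({\rm Id}-\theta K_{\lambda,T})$ are $o(1)$, and then evaluate that determinant limit as a Riemann sum via the bracketing~\eqref{bdx} and the same logarithmic integral identity. The only point where you should add detail is the claim that the Gaussian integral in $x$ is \emph{finite}: the bound from Lemma~\ref{c.Lem 5.3} is quadratic in $x$ (not uniform in $x$ as you wrote), and one must check that the combined quadratic coefficient $\theta\cdot\frac{2\beta_k^2}{|\alpha_k|}+\frac{|\alpha_k|}{2(1-1/\pi)}$ does not exceed the $|\alpha_k|$ coming from the density~\eqref{eq:density}; the paper does this numerically, reducing it to $\pi>3$.
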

\begin{proof}
For $\lambda \in \R$, denote $\theta=\frac 12 \lambda(1+\lambda)$. Note that $\lambda \in \mcl D$ if and only if 
$\theta \le \min\limits_{k=1,...,d}\set{\frac{\alpha_k^2 }{8\beta_k^2}}$.
It follows from Theorem \ref{t:ExpIntZ} that when $\theta = \frac{1}{\gamma_1} \ge \min\limits_{k=1,...,d}\set{\frac{\alpha_k^2+(\omega_1^k(T))^2}{8\beta_k^2}}$,
\begin{align}
& \frac{1}{T}\log \E^{x} \exp\left(\theta \int_{0}^{T} |Z_{\lambda,t}|^{2} \dif t\right) =\infty,  \quad  \mbox{ for all } x \in \R^{d},  \mbox{ and }\quad  T>0,  \label{e:ExExpZInf}
\end{align} 
and thus \eqref{e:ExExpZInf} also holds true with  $\E^{x}$ replaced by $\E^{\mu}$. This shows that
\begin{equation}  \label{e:ExExpZInf-1}
\tl \Lambda_{T}(\lambda)=\infty \ \ \quad \ {\rm whenever} \ \ \frac 12 \lambda(1+\lambda) \ge \min\limits_{k=1,...,d}\set{\frac{\alpha_k^2+(\omega_1^k(T))^2}{8\beta_k^2}}.
\end{equation} 
The second relation in the lemma is now immediate.

We now prove the first statement in the lemma. 
We need to show that when $\theta\le \min\limits_{k=1,...,d}\set{\frac{\alpha_k^2 }{8\beta_k^2}}$
\begin{align*}
	\frac{1}{T}\log \E^{\mu} \exp\left(\theta \int_{0}^{T} |Z_{\lambda,t}|^{2} \dif t\right)
\end{align*}
converges to the right side of  \eqref{Lambda closed form}.
By Theorem  \ref{t:ExpIntZ}
\begin{align*}
&	\E^{\mu} \exp\left(\theta \int_{0}^{T} |Z_{\lambda,t}|^{2} \dif t\right)\\
&=	\frac{1}{\sqrt{\mathrm{det}({\rm Id}-\theta K_{\lambda,T})}}  \\
    & \quad \times \int_{\R^d} \exp\left[\theta S_{0}^x(T)-\frac \theta 2  \tr(K_{\lambda,T})+\frac{\theta^2}2 \Ll G_{\lambda,T}^x, ({\rm Id}-\theta K_{\lambda,T})^{-1}G_{\lambda,T}^x\Rr_{L^{2}([0,T]; \R^{d})}\right] \mu(dx),
\end{align*}
Thus, using \eqref{eq:soxt},
\begin{align*}
\frac{1}{T}\log \E^{\mu} \exp\left(\theta \int_{0}^{T} |Z_{\lambda,t}|^{2} \dif t\right)	= I_1 + I_2+ I_3
\end{align*}
where
$I_{3}=-\frac{1}{2T}  \log \left[\mathrm{det}({\rm Id}-\theta K_{\lambda,T})\right]$,
\begin{align*}
I_1 &= - \frac{\theta}{2T} \tr(K_{\lambda,T})+\frac{\theta}{T}\int_{0}^{T} \tr\left[N^{'} e^{uM} {N}\right] (T-u) \dif u,
\end{align*}
and
\begin{align*}
	I_2 &= 
	\frac{1}{T}\log \int_{\R^d} \exp\left[\left|\left(\int_0^T e^{sM} \dif s\right)^{\frac 12}       N x\right|^{2}\theta
	+\frac{\theta^2}2 \Ll G_{\lambda,T}^x, ({\rm Id}-\theta K_{\lambda,T})^{-1}G_{\lambda,T}^x\Rr_{L^{2}([0,T]; \R^{d})}\right] \mu(dx)
\end{align*}
{Observe that the following relations hold: 
\Bes
\int_{0}^{T} e^{uM} \dif u=M^{-1} \left(e^{MT}-{\rm Id}\right),  \ \ \ \ \ \int_{0}^{T} e^{uM} (T-u) \dif u=M^{-2}\left(e^{MT}-{\rm Id}\right)-T M^{-1}.
\Ees
Since $M$ is negative definite, $\|e^{Mt}\| \le 1$ for all $t \ge 0$, and thus
$$\lim_{T \rightarrow \infty} 
\frac{1}{T}\int_{0}^{T} \tr\left[N^{'} e^{uM} {N}\right] (T-u) \dif u
=-{\rm tr}(N^{'} M^{-1} N).$$ 
On the other hand, \eqref{e:TrKTMer} implies $\lim_{T \rightarrow \infty}\frac{1}T  \tr(K_{\lambda,T})=-2 {\rm tr}(N^{'} M^{-1} N)$, which, together with the previous limit, yields
$$\lim_{T \rightarrow \infty} I_{1}=0.$$}

Recalling that the density of $\mu$ is given by \eqref{eq:density}, the relation (\ref{e: S01}) and Lemma~\ref{c.Lem 5.3} imply that 
\begin{align*}
I_2 &\le \frac{1}{T}\log \int_{\R^d} (2\pi)^{-\frac{d}{2}}|\mathrm{det} (M)|^{\frac{1}{2}}  \exp \left[\sum_{k=1}^d \left(\frac{\abs{\alpha_k}}{4} + \frac{\abs{\alpha_k}}{2(1-\frac{1}{\pi})  }\right) \abs{\innp{x,U_k}_{\C^d}}^2\right] \exp\left(\frac{x'M x}2\right)\dif x \\
& = \frac{1}{T}\log \int_{\R^d} (2\pi)^{-\frac{d}{2}}|\mathrm{det} (M)|^{\frac{1}{2}} \exp \left[  (\frac{3}{4}-\frac{1}{2(1-\frac{1}{\pi})  })  \sum_{k=1}^d \alpha_k |{y}_k|^2\right] \dif x,
\end{align*}
where the last inequality uses the fact that $U^* M U=\mbox{diag}\{2 \alpha_1,...,2 \alpha_d\}$ by \eqref{eq:spect},
and $\exp\left(\frac{x'M x}2\right)=\exp\left(\sum_{k=1}^d \alpha_k |y_k|^2\right)$ with $y_k=\innp{x,U_k}_{\C^d}$. Since $\frac{3}{4}-\frac{1}{2(1-\frac{1}{\pi})  }>0$ and $\alpha_k<0$,  we have that, $I_2<\frac{c}{T}$ with a constant $c>0$ depending only on $M$. Thus $\lim_{T\to \infty} I_{2}=0.$

For $I_{3}$, by Theorem \ref{t:ExpIntZ} and \eqref{e:SpeDec} we have 
\Bes
\begin{split}
I_{3}&=-\frac{1}{2T}  \sum_{j=1}^{\infty} \sum_{k=1}^{d} \log (1-\theta \gamma^{(k)}_{j}) \\ 
&=-\frac{1}{2T}  \sum_{j=1}^{\infty} \sum_{k=1}^{d} \log \left(1-\frac{8 \theta\beta^{2}_k}{\alpha_k^2+(\omega_j^k(T))^2}\right) \\ 
& \rightarrow -\frac 12 \int_{0}^{\infty} \sum_{k=1}^{d} \log \left(1-\frac{8 \theta\beta^{2}_k}{\alpha_k^2+\pi^{2} x^2}\right) \dif x 
\end{split}
\Ees
as $T \rightarrow \infty$, where the convergence on the last line follows from \eqref{bdx}.  

The result now follows on observing that  for $b>0$ and $b\ge a$,
\begin{align*}
\int_0^{\infty} \log (1 - \frac{a}{b+x^2}) dx = \big(\sqrt{b-a}-\sqrt{b}\, \big) \pi.
\end{align*}
\end{proof}

\subsection{Proofs of Theorems \ref{t:EPR} and \ref{t:FT}}  \label{ss:PLDP}
For  $n \in \N$, define
$$\tau_n=\inf\{t \ge 0: |X_{t}| \ge n\}, \ \ \ \ X^{n}_{t}=X_{t} 1_{\{t  \le \tau_n\}}.$$
Clearly, 
$\lim_{n \rightarrow \infty} \tau_n=\infty$  a.s.
For $\lambda \in \R$, let
\begin{equation*}
 \dif {W}_{t}= \dif {B}_{t} - \lambda       N X^{n}_t \dif t,
\end{equation*}
and
\begin{align*}
\mathcal E^n_t(\lambda)&=\exp\set{-\frac{ \lambda^2 }{2}\int_0^{t} |      NX^{n}_s|^{2}  \dif s +  \lambda \int_0^{t} (      NX^{n}_s)^{'} \dif B_{s}}\\
&= \exp\set{-\frac{ \lambda^2 }{2}\int_0^{t \wedge \tau_n} |      NX_s|^{2}  \dif s +  \lambda \int_0^{t\wedge \tau_n} (      NX_s)^{'} \dif B_{s}}.
\end{align*}
Recall $\tl \Lambda$ and $\Lambda$ in Section {\ref{s:Epr} }. 
Denote the probability space on which the stationary process $X_t$  and the Brownian motion $B_t$ are defined as $(\Omega, \mcl F, \PP)$. 
Let $\mcl F_t$ be the filtration generated by $X_0$ and $\{B_t\}$, namely $\mcl F_t = \sigma\{X_0, B_s, 0\le s\le t\}$.
Let
\begin{align*}
\Lambda^n_t(\lambda)
&=\frac{1}{t}\log \E\exp\set{\lambda \int_{0}^{t\wedge \tau_n}\,(      NX_s)^{'} \dif B_s  +\frac\lambda 2 \int_0^{t\wedge \tau_n} |      NX_s|^{2}\dif s}.
\end{align*}
We now  prove \eqref{e:TilD=D}.
\begin{prop} \label{p:PropTilD1}
	We have $\mathcal D_{\tilde \Lambda}=\mathcal D_{\Lambda}$ and $\tilde \Lambda=\Lambda$.
\end{prop}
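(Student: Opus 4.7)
The plan is to combine Girsanov's theorem with the localized exponential martingale $\mathcal{E}_t^n(\lambda)$ and then pass to the limit $n \to \infty$. Because the exponent of $\mathcal{E}_t^n(\lambda)$ is bounded on $\{s \le \tau_n\}$, Novikov's condition holds trivially for the stopped process, so $\mathcal{E}_t^n(\lambda)$ is a genuine martingale with $\E^\mu[\mathcal{E}_t^n(\lambda)] = 1$. Defining $\tilde{\PP}^n$ on $\mathcal{F}_t$ by $d\tilde{\PP}^n/d\PP^\mu = \mathcal{E}_t^n(\lambda)$, under $\tilde{\PP}^n$ the process $W_s = B_s - \lambda\int_0^{s\wedge\tau_n} N X_u \, du$ is a Brownian motion and, on $\{s \le t \wedge \tau_n\}$, $X$ satisfies $dX_s = D_\lambda X_s\, ds + dW_s$. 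Consequently, the law of $(X_s)_{0\le s\le t\wedge\tau_n}$ under $\tilde{\PP}^n$ agrees with the law of $(Y_{\lambda,s})_{0\le s\le t\wedge\tilde\tau_n}$ under $\PP^\mu$, where $\tilde\tau_n := \inf\{s \ge 0 : |Y_{\lambda,s}| \ge n\}$.

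Inserting the algebraic identity
\begin{align*}
\lambda\int_0^{t\wedge\tau_n}(NX_s)'\,dB_s + \tfrac{\lambda}{2}\int_0^{t\wedge\tau_n}|NX_s|^2\,ds = \log\mathcal{E}_t^n(\lambda) + \tfrac{\lambda(1+\lambda)}{2}\int_0^{t\wedge\tau_n}|NX_s|^2\,ds
\end{align*}
into the definition of $\Lambda_t^n(\lambda)$ and applying the above law identification produces the Girsanov identity
\begin{align*}
e^{t\Lambda_t^n(\lambda)} = \E^\mu\Big[\exp\Big(\tfrac{\lambda(1+\lambda)}{2}\int_0^{t\wedge\tilde\tau_n}|NY_{\lambda,s}|^2\,ds\Big)\Big],
\end{align*}
i.e., $\Lambda_t^n(\lambda)$ is exactly the $\tilde\tau_n$-stopped version of $\tilde\Lambda_t(\lambda)$.

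Now let $n \to \infty$. Since $\tilde\tau_n \to \infty$ $\PP^\mu$-a.s., the right-hand side converges to $e^{t\tilde\Lambda_t(\lambda)}$ (allowed to take value $+\infty$) by monotone convergence when $\lambda(1+\lambda) \ge 0$ and by dominated convergence (integrand bounded by $1$) when $\lambda(1+\lambda) < 0$. For the left-hand side, a.s.\ convergence of the exponent $A_t^n$ to $A_t$ combined with Fatou's lemma yields the inequality $\Lambda_t(\lambda) \le \tilde\Lambda_t(\lambda)$. The matching reverse inequality is obtained by the symmetric Girsanov argument on the $Y_\lambda$-side: localize the change-of-measure exponential $\exp\{-\lambda\int_0^{t\wedge\tilde\tau_n}(NY_{\lambda,s})'\,dW_s - \tfrac{\lambda^2}{2}\int_0^{t\wedge\tilde\tau_n}|NY_{\lambda,s}|^2\,ds\}$ on the $Y_\lambda$ filtration; this change of measure transforms $Y_\lambda$ back into $X$ and, after passing to the monotone/dominated limit, delivers $\tilde\Lambda_t(\lambda) \le \Lambda_t(\lambda)$. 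Combining both inequalities yields $\Lambda_t(\lambda) = \tilde\Lambda_t(\lambda)$ for every $t > 0$, and letting $t \to \infty$ completes the proof of $\Lambda = \tilde\Lambda$ and $\mathcal{D}_\Lambda = \mathcal{D}_{\tilde\Lambda}$.

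The principal technical obstacle is the passage to the limit on the left-hand side: because $A_t^n$ contains a stochastic integral term, it is not monotone in $n$, so one inequality is free from Fatou but the other requires a symmetric Girsanov change of measure from $Y_\lambda$ back to $X$. The localization scheme, implemented via the stopping times $\tau_n$ and $\tilde\tau_n$, is essential to guarantee that the exponential martingales involved are true (and not merely local) martingales.
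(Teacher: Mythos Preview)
Your Girsanov localization is set up correctly and indeed yields the stopped identity $\Lambda_t^n(\lambda)=\tilde\Lambda_t^n(\lambda)$, from which monotone (or dominated) convergence on the $Y_\lambda$-side and Fatou on the $X$-side give $\Lambda_t(\lambda)\le\tilde\Lambda_t(\lambda)$. The gap is in the ``symmetric Girsanov'' step for the reverse inequality. The inverse measure change from $Y_\lambda$ back to $X$ does not produce a new inequality: it simply reproduces the \emph{same} stopped identity $\tilde\Lambda_t^n(\lambda)=\Lambda_t^n(\lambda)$. After the change of measure the exponent on the $X$-side is again $\lambda\int_0^{t\wedge\tau_n}(NX_s)'\,dB_s+\tfrac{\lambda}{2}\int_0^{t\wedge\tau_n}|NX_s|^2\,ds$, which contains the stochastic integral and is not monotone in $n$; so Fatou still points the same way and you recover $\Lambda_t(\lambda)\le\tilde\Lambda_t(\lambda)$, not the reverse. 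Applying the unlocalized inverse Girsanov directly would require $\exp\{-\lambda\int_0^t(NY_{\lambda,s})'\,dW_s-\tfrac{\lambda^2}{2}\int_0^t|NY_{\lambda,s}|^2\,ds\}$ to be a true martingale, i.e.\ essentially Novikov for $\tfrac{\lambda^2}{2}\int_0^t|NY_{\lambda,s}|^2\,ds$, which is precisely the kind of exponential integrability you are trying to prove.

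The paper closes this gap by bringing in the explicit analysis of $\tilde\Lambda$ carried out earlier. Knowing from Lemma~\ref{l:CalTlLam} that $\mathcal D_{\tilde\Lambda}=[a,b]$ with an explicit formula, for $\lambda\in(0,b)$ one picks $\bar\lambda\in(\lambda,b)$ and uses $\tilde\Lambda_t(\bar\lambda)<\infty$ to obtain uniform integrability of the family $\exp\{\lambda\int_0^{t\wedge\tau_n}(NX_s)'\,dB_s+\tfrac{\lambda}{2}\int_0^{t\wedge\tau_n}|NX_s|^2\,ds\}$, upgrading Fatou to genuine convergence and hence $\Lambda_t(\lambda)=\tilde\Lambda_t(\lambda)$ on $(a,b)$. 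The inclusion $\mathcal D_\Lambda\subset\mathcal D_{\tilde\Lambda}$ is then argued by contradiction using the eigenvalue estimate \eqref{e:ExExpZInf-1} and H\"older's inequality, and the endpoints $a,b$ are handled via H\"older together with the continuity of $\tilde\Lambda$ at the boundary. In short, the reverse inequality is not obtainable by a purely symmetric soft argument; it relies on the quantitative information about $\tilde\Lambda$ established in Sections~\ref{s:AnaZ}--\ref{s:EigK}.
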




\begin{proof} 
\emph{\underline{Step 1. The relation between $X_{t}$ and $Y_{\lambda, t}$}:}
Since $|X^{n}_{t}| \le n$ for all $t$, Novikov's condition (cf. \cite[3.5.D]{karshr}) clearly holds for the exponential supermartingale $\mathcal E^n_t(\lambda)$. Thus Girsanov theorem yields that $(W_{t})_{t \ge 0}$ is a standard $d$-dimensional Brownian motion under the measure $\PP^n_{W}$
which is uniquely determined by
\begin{equation} \label{e:CMG}
  \frac{\dif \PP^n_{W}}{\dif \PP}\big |_{\mathcal F_{t}}=\mathcal E^n_t(\lambda), \ \ \ \ \ \ t \ge 0.
\end{equation}
and  equation \eqref{eq:redu} can be rewritten as
\begin{equation*} 
\begin{split}
\dif X_{t}&={A} X_{t}\dif t+\lambda  NX_{t} 1_{\{t \le \tau_n\}} \dif t+\dif W_t.
\end{split}
\end{equation*}
If one defines $Y_{\lambda, t}$ on $(\Omega, \mcl F, \PP^n_{W})$ as 
\begin{equation}  \label{langevin new-1-1}
\begin{split}
\dif Y_{\lambda, t}=D_\lambda Y_{\lambda, t}\dif t + \dif W_t, \; Y_{\lambda, 0} = X_0,
\end{split}
\end{equation} 
then $\{Y_{\lambda, t}\}$ (under $\PP^n_{W}$) has the same law as the process in \eqref{langevin new} under the stationary measure
$\PP^{\mu}$ considered there. Also, $Z_{\lambda, s} = N Y_{\lambda, s} = N X_s$ for $0\le s \le \tau_n \wedge t$.
Thus
\begin{align*}
	  \Lambda^n_t(\lambda)&= \frac{1}{t}\log \E\exp\set{\lambda \int_{0}^{t\wedge \tau_n}\,(      NX_s)^{'} \dif B_s  +\frac\lambda 2 \int_0^{t\wedge \tau_n} |      NX_s|^{2}\dif s}\\
	   &=  \frac{1}{t}\log \E^{\PP^n_W}\exp\set{\frac12\lambda(1+\lambda)\int_{0}^{t \wedge \tau_n}\,  |NX_s|^{2} \dif s}\\
& = \frac{1}{t}\log \E^{\mu}\exp\set{\frac12\lambda(1+\lambda)\int_{0}^{t \wedge \tau_n}\,  |Z_{\lambda,s}|^{2} \dif s}.
 \end{align*}
By Fatou's lemma and monotone convergence theorem, we  have
\begin{equation}   \label{e:Fatou}
\Lambda_t(\lambda) \le \lim_{n \rightarrow \infty} \Lambda^n_t(\lambda)=\tilde \Lambda_t(\lambda).
\end{equation}

\emph{\underline{Step 2. $\mathcal D^{\circ}_{\tilde \Lambda} \subset \mathcal D^{\circ}_{\Lambda}$}:}
Recall from Lemma \ref{l:CalTlLam} the domain of $\tilde \Lambda$ is a finite closed interval, we denote this interval  by $[a,b]$ and see that $-\infty<a<0<b<\infty$. Clearly $0  \in \mathcal D_{\tilde \Lambda} \cap \mathcal D_{\Lambda}$. 

Suppose now that $\lambda \in (0,b)$. Fix  a $\bar \lambda
\in (\lambda,b)$. We have, for $t>0$,
\begin{equation}
\Lambda^n_t(\bar \lambda)=\frac 1t \log \E^{\mu}\exp\set{\frac12\bar \lambda(1+\bar \lambda)\int_{0}^{t \wedge \tau_n}\, |Z_{\lambda, s}|^{2} \dif s} \le \tilde \Lambda_t(\bar \lambda)<\infty.
\end{equation}
Because $0<\lambda<\bar \lambda$, the above inequality implies a uniform integrability (with respect to $n$) for
$\exp\set{\lambda \int_{0}^{t\wedge \tau_n}\,(      NX_s)^{'} \dif B_s  +\frac\lambda 2 \int_0^{t\wedge \tau_n} |      NX_s|^{2}\dif s}$,
and thus
\begin{equation}   \label{e:Fatou1}
\begin{split}
& \ \ \ \ \ \ \E^{\mu}\exp\set{\lambda \int_{0}^t\,(      NX_s)^{'} \dif B_s  +\frac\lambda 2 \int_0^t |      NX_s|^2 \dif s } \\
&=\lim_{n \rightarrow \infty} \E^{\mu} \exp\set{\lambda\int_{0}^{t\wedge \tau_{n}}\, (      NX_s)^{'} \dif B_s  +\frac{\lambda}{2} \int_0^{t\wedge \tau_{n}} |      NX_s|^2\dif s }.
\end{split}
\end{equation}
Hence,
\begin{equation}   \label{e:Fatou1b}
\Lambda_t(\lambda)=\lim_{n \rightarrow \infty} \Lambda^n_t(\lambda)=\tilde \Lambda_t(\lambda)    \ \ \ {\rm for} \ \lambda \in (0,b).
\end{equation}
Similarly, we have
\begin{equation}   \label{e:Fatou2}
\Lambda_t(\lambda)=\lim_{n \rightarrow \infty} \Lambda^n_t(\lambda)=\tilde \Lambda_t(\lambda)    \ \ \ {\rm for} \ \lambda \in (a,0).
\end{equation}
Hence, $\mathcal D^{\circ}_{\tilde \Lambda} \subset \mathcal D^{\circ}_{\Lambda}$ and
\Be  \label{e:DomLam1}
\tilde \Lambda(\lambda)=\Lambda(\lambda) \ \ \ {\rm for}\ \  \lambda \in \mathcal D^{\circ}_{\tilde \Lambda}.
\Ee

\emph{\underline{Step 3. $\mathcal D_{\Lambda} \subset \mathcal D_{\tilde \Lambda}$}:}
 We argue via contradiction. Suppose there exists a  $\lambda^{*}
\in \mathcal D_{\Lambda}$ such that $\lambda^{*} \notin \mathcal D_{\tilde \Lambda}$, i.e. $\lambda^* \notin [a,b]$. Assume that
$\lambda^{*}>b$; the case $\lambda^{*}<a$ can be handled similarly. 
Recalling the definition of $[a,b]$ (see Lemma \ref{l:CalTlLam}), we see that $\theta^{*}=\frac 12 \lambda^{*}(1+\lambda^{*})>\min\limits_{k=1,...,d}\set{\frac{\alpha_k^2 }{8\beta_k^2}}$. Since $\lim_{T \rightarrow \infty}\omega_1^k(T)=0$ for each $k=1,...,d$, we can choose $T$ sufficiently large so that $\theta^{*}>\hat \theta$ with $\hat \theta=\min\limits_{k=1,...,d}\set{\frac{\alpha_k^2+(\omega_1^k(T))^2}{8\beta_k^2}}$. 
Also, since $\Lambda(\lambda^*)<\infty$, by choosing $T$ larger if needed, we can assume that $\Lambda_T(\lambda^*)<\infty$.
Choose an increasing sequence $\{\lambda_{n}\}_{n \ge 1}$ such that $\lambda_{n}>b$ and $\theta_{n} \uparrow  \theta^*$ with $\theta_{n}=\frac 12 \lambda_{n}(1+\lambda_{n})$. Clearly, $\lambda_{n}<\lambda^{*}$, and since $\lim \theta_n > \hat \theta$,  by \eqref{e:ExExpZInf-1} we know 
$$\lim_{n \rightarrow \infty} \tilde \Lambda_{T}(\lambda_{n})=\infty.$$
On the other hand, since $\lambda_n <\lambda^*$, by the same argument as in the proof of \eqref{e:Fatou1b}, we get
$\tilde \Lambda_{T}(\lambda_{n})=\Lambda_{T}(\lambda_{n})$ and thus 
$$\lim_{n \rightarrow \infty} \Lambda_{T}(\lambda_{n})=\infty.$$
By H\"{o}lder inequality we get $\Lambda_{T}(\lambda^{*})  \ge \frac{\lambda^{*}}{\lambda_{n}} \Lambda_{T}(\lambda_{n})$ for all $\lambda_{n}$, and hence 
$\Lambda_{T}(\lambda^{*})=\infty$. But this is a contradiction since $T$ is chosen such that $\Lambda_{T}(\lambda^{*})<\infty$.
Thus we have that $\mathcal D_{\Lambda} \subset \mathcal D_{\tilde \Lambda}$.

\emph{\underline{Step 4. $\mathcal D_{\Lambda}=\mathcal D_{\tilde \Lambda}$ and $\tl \Lambda=\Lambda$}:} From the previous two steps, we clearly see that $(a,b) \subset \mcl D_{\Lambda} \subset [a,b]$ and $\tl \Lambda(\lambda)=\Lambda(\lambda)$ for $\lambda \in (a,b)$. 
To conclude the proof, we only need to prove $\Lambda(a)=\tl \Lambda(a)$ and $\Lambda(b)=\tl \Lambda(b)$.
For a small $\e>0$, by \eqref{e:Fatou}, \eqref{e:Fatou1b} and an application of H\"{o}lder inequality to $\Lambda_{t}(b-\e)$, we have 
\Bes
\tl \Lambda_{t}(b-\e)=\Lambda_{t}(b-\e) \le \frac{b-\e} b\Lambda_{t}(b) \le  \frac{b-\e} b\tl \Lambda_{t}(b), \ \ \ t>0,  
\Ees 
which leads to  
\Bes
\tl \Lambda(b-\e)\le \frac{b-\e} b\liminf_{t \rightarrow \infty}\Lambda_{t}(b) \le \frac{b-\e} b\limsup_{t \rightarrow \infty}\Lambda_{t}(b) \le  \frac{b-\e} b\tl \Lambda(b).
\Ees 
Since $\lim_{\e \rightarrow 0+} {\tl \Lambda(b-\e)=\tl  \Lambda(b)}$ by Lemma \ref{l:CalTlLam}, we have $\Lambda(b)=\tl \Lambda(b)$. Similarly we get $\Lambda(a)=\tl \Lambda(a)$.
\end{proof}

In order to verify conditions of G\"{a}rtner-Ellis theorem, define
\begin{equation}\label{fl}
  F(\ell)= \frac 12\sum_{k=1}^{d}   \left(\sqrt{ \alpha_k^2- \ell \beta_k^2} +\alpha_k\right),\ \ \ \ \  
  \ell \in \left(-\infty,   \min_{k=1,...,d}\set{\frac{\alpha_k^2}{\beta_k^2}}\right].
\end{equation}
It is easy to check that 
\begin{align*}
F'(\ell) =- \frac 14\sum_{k=1}^{d} \frac{\beta_k^2}{\sqrt{\alpha_k^2 -\ell \beta_k^2} },\ \ \ \ \    \ell \in \left(-\infty,   \min_{k=1,...,d}\set{\frac{\alpha_k^2}{\beta_k^2}}\right).
\end{align*} 
Note that $\abs{ F'(\ell)}\to \infty$ as $\ell \to \min\limits_{k=1,...,d}\set{\frac{\alpha_k^2}{\beta_k^2}}-$.

We can now complete the proofs of our main results.

\begin{proof} [{\bf Proofs of Theorems \ref{t:EPR} and \ref{t:FT}}]
	The first relation in  
Theorem \ref{t:FT} follows immediately from Proposition \ref{p:PropTilD1} and Lemma \ref{l:CalTlLam}. Now we prove Theorem \ref{t:EPR} and the Cohen-Gallavotti symmetry properties. 
From the previous proposition, Assumption 2.3.2 of \cite{demzeit} is satisfied.
Let
$$\ell(\lambda)=4\lambda(1+\lambda).$$
Then by Proposition \ref{p:PropTilD1} and Lemma \ref{l:CalTlLam},  we have that for all $\lambda \in \mcl D_{\Lambda}$,
\begin{align}
\Lambda(\lambda)&=-F(\ell (\lambda)),\label{Lamd lamd}\\
\Lambda'(\lambda)&= ( 1+2\lambda ) \sum_{k=1}^{d} \frac{\beta_k^2}{\sqrt{\alpha_k^2 -\ell(\lambda)\beta_k^2} }.\nonumber
\end{align}
Hence, $\Lambda'(\lambda)$ exists for all $\lambda \in \mcl D_{\Lambda}^{\circ}$ and  
$$\lim_{\lambda \rightarrow \partial \mcl D_{\Lambda}} |\Lambda'(\lambda)|=\infty.$$ 
From this it follows that $\Lambda$ is a lower semicontinuous function on $\R$ which is essentially smooth in the sense of
\cite[Definition 2.3.5]{demzeit}. Thus
by G\"artner-Ellis Theorem \cite[Section 2.3]{demzeit}, we immediate obtain that  EPR $e_p(t)$ satisfies an LDP with rate function $I$ (in particular, $I$ has compact level sets) given as 
 \begin{align*}
  I(x) &=\sup\set{x\lambda +F(\ell(\lambda)):\, \lambda \in  \mcl D_{\Lambda}} \\
    &=\sup \set{x\lambda(\ell)+F(\ell):\, -1 \le \ell \le  \min_{k=1,...,d}\set{\frac{\alpha_k^2}{\beta_k^2}}},
\end{align*}
where $F(\ell)$ is given as (\ref{fl}) and $\lambda(\ell)$ is an inverse function of $
   \ell=4  \lambda(1+\lambda)$ defined as $\lambda(\ell)=\frac{\sqrt{\ell+1}-1}{2}$ for $x\ge 0$ and $\lambda(\ell)=\frac{-\sqrt{\ell+1}-1}{2}$ for $x<0$.

When $x=0$,  it is clear that $I(0)=F(-1)$ since $F(\ell)$ is strictly decreasing on $\left[ -1,\, \min\limits_{k=1,...,d}\set{\frac{\alpha_k^2}{\beta_k^2}}\right]$.

When $x\neq 0$, by differentiating the function $s(\ell):=x\lambda(\ell)+F(\ell)$, we have that the unique zero point $\ell_0(x)$ of
$s'(\ell)$ is given as the solution of the following equation:
\begin{align*}
{\abs{x}}={\sqrt{1+\ell}}\sum_{k=1}^{d} \frac{\beta_k^2}{\sqrt{\alpha_k^2 -\ell \beta_k^2} },\qquad -1 \le \ell<  \min_{k=1,...,d}\set{\frac{\alpha_k^2}{\beta_k^2}}.
\end{align*}
Substituting it into $s(\ell)$, we obtain (\ref{rate func}).  

Finally,  the  Cohen-Gallavotti symmetry properties in \eqref{Cohen symmet} are immediate from the explicit expressions 
for  $\Lambda$ and $I$ in \eqref{eq:defnlam} and \eqref{rate func}, respectively.
\end{proof}

\section{Appendix}
{\bf Proof of the claim in \eqref{e:CommQAEtAl}}
Since ${ A} \in \R^{d \times d}$ is normal, there is a \emph{real} orthogonal matrix ${ P}$ (cf. \cite[Theorem 2.5.8]{hj}) such that
\begin{equation}  \label{e:PAP}
{ P}^{'}{ A}{  P}=\mbox{diag} \{d_1, \ldots , d_m, E_1,\,E_2,\,\cdots,E_k\},
\end{equation}
where for $1\le i \le m$, $d_i$ is  a real number and for $1\le j \le k$, $E_j$ is a real $2\times 2$ matrix
of the form, 
\begin{equation*}
E_j= \begin{bmatrix}   e_j & \tilde e_j \\ - \tilde e_j & e_j  \end{bmatrix}
\end{equation*}
and $m+2k=d$,
where {$e_j \in \R, \tilde e_j \in \R$ with $\tilde e_j\neq 0$}. From \eqref{e:PAP} and assumption {\bf (A)}, we see that 
\Be  \label{e:MEigNeg}
M:=A+A^{'} {\rm \ has \ eigenvalues}\ \ \{2 d_i, 2e_j, 1\le i \le m, 1 \le j \le k\} \ {\rm and \ is \ negative \ definite}.
\Ee
Define
\begin{equation*}
{\hat A}= \mbox{diag}\{d_1, \ldots , d_m, e_1 + \mi \tilde e_1, e_1 - \mi \tilde e_1, \ldots , e_k + \mi \tilde e_k, e_k - \mi \tilde e_k\}
%
%
%
\end{equation*}
Then there
is a complex unitary matrix $U$ (cf. \cite[Theorems 2.5.3 and 2.5.8]{hj}), such that
\begin{align}
{U}^*{A}{U}&= \mbox{diag}\{ \hat A \},\label{ubu}.
\end{align}

By  our assumption in \eqref{e:AQComm} and \cite[Theorems 2.5.5 and 2.5.6]{hj} it follows that the above unitary matrix $U$ and the diffusion matrix $Q$ satisfy 
\begin{equation}\label{ububQQ}
{U}^* {Q} {U} =\mbox{diag}\set{q_1,\dots,q_d},
\end{equation} where $q_k>0$ are the eigenvalues of $Q$. 
It is easy to check using \eqref{ubub} and \eqref{ububQQ} that 
\Bes  
A^{'} Q=Q A^{'}, \quad Q^{\frac 12} A=A Q^{\frac 12}, \quad Q^{\frac 12} A'=A' Q^{\frac 12}.
\Ees 
The statement in \eqref{e:CommQAEtAl} is immediate from this.\\

{\bf Acknowledgement.} {Research of AB is supported in part by the National Science Foundation (DMS-1814894 and DMS-1853968). Research of YC is supported in part by NSFC Grants (11871079, 11961033).
Research of LX is supported in part by Macao S.A.R grant FDCT  0090/2019/A2 and University of Macau grant  MYRG2018-00133-FST.}

\end{document}